\title{Mazur's isogeny theorem}
\author{Philippe Michaud-Jacobs}
\date{\vspace{-4ex}}
\newtheorem{theorem}{Theorem}[section]
\newtheorem*{theorem*}{Theorem}
\newtheorem {lemma}[theorem]{Lemma}
\newtheorem {proposition}[theorem]{Proposition}
\theoremstyle{definition}
\newtheorem{remark}[theorem]{Remark}
\newtheorem*{remark*}{Remark}
\providecommand{\Q}{\mathbb{Q}}
\providecommand{\Z}{\mathbb{Z}}
\newcommand{\Addresses}{{
  \bigskip
  \footnotesize

 \textsc{Mathematics Institute, University of Warwick, CV4 7AL, United Kingdom}\par\nopagebreak
  \textit{E-mail address}: \texttt{p.rodgers@warwick.ac.uk}
}}
\let\svthefootnote\thefootnote
\newcommand\freefootnote[1]{%
  \let\thefootnote\relax%
  \footnotetext{#1}%
  \let\thefootnote\svthefootnote%
}
\begin{document}

\maketitle

\begin{abstract}
Mazur's isogeny theorem states that if $p$ is a prime for which there exists an elliptic curve $E / \mathbb{Q}$ that admits a rational isogeny of degree $p$, then $ p \in \{2,3,5,7,11,13,17,19,37,43,67,163 \} $. This result is one of the cornerstones of the theory of elliptic curves and plays a crucial role in the proof of Fermat's Last Theorem. In this expository paper, we overview Mazur's proof of this theorem, in which modular curves and Galois representations feature prominently.
\end{abstract}


\section{Introduction}


The study of elliptic curves plays a fundamental role in number theory. The maps defined on any mathematical object provide insight into their structure. In the case of elliptic curves, we are most interested in \emph{isogenies}. 
\freefootnote{\emph{Date}: \date{\today}.}
\freefootnote{\emph{Keywords}: Elliptic curve, isogeny, Galois representation, modular curve.}
\freefootnote{\emph{MSC2020}: 11F80, 11G05, 11G18, 11-02.}
\freefootnote{The author is supported by an EPSRC studentship and has previously used the name Philippe Michaud-Rodgers.}
The set of points of an elliptic curve forms an abelian group, and an isogeny between elliptic curves is a non-constant morphism that preserves this group structure. An isogeny between elliptic curves defined over $\Q$ is said to be rational if it can be represented by a rational map with coefficients in $\Q$, and its degree is the size of its kernel. Rational isogenies of prime degree form the basic building blocks of isogenies between elliptic curves. Mazur's isogeny theorem, proven in 1978, provides a complete classification of the possible prime degrees of rational isogenies.

\begingroup
\renewcommand\thetheorem{1}
\begin{theorem}[Mazur's isogeny theorem, {\citep[Theorem~1]{ratisog}}]\label{Main} Let $p$ be a prime such that there exists an elliptic curve $E / \Q$ that admits a rational isogeny of degree $p$. Then \[ p \in \{2,3,5,7,11,13,17,19,37,43,67,163 \}.\]
\end{theorem}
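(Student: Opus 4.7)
The plan is to recast the theorem as a statement about rational points on modular curves. The affine modular curve $Y_0(p)$ is a coarse moduli space parametrising isomorphism classes of pairs $(E, C)$ in which $E$ is an elliptic curve and $C$ is a cyclic subgroup of order $p$, and its smooth projective compactification $X_0(p)$ is obtained by adjoining two $\Q$-rational cusps, usually denoted $0$ and $\infty$. A rational $p$-isogeny out of an elliptic curve $E/\Q$ therefore corresponds to a non-cuspidal rational point on $X_0(p)$, and Theorem~\ref{Main} becomes the claim that $Y_0(p)(\Q)=\emptyset$ for every prime $p$ outside the listed set.

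For small primes the question can be settled directly using the genus $g_p$ of $X_0(p)$. When $p\in\{2,3,5,7,13\}$ one has $g_p=0$, and since $X_0(p)$ carries a rational cusp it is $\Q$-isomorphic to $\mathbb{P}^1_\Q$; the resulting infinite family of rational points is matched by the infinite families of elliptic curves over $\Q$ admitting a $p$-isogeny. When $p\in\{11,17,19\}$ one has $g_p=1$, so $X_0(p)$ is an elliptic curve whose Mordell--Weil group is computed to be finite, with non-cuspidal rational points accounting exactly for the known $p$-isogenies. The substance of the theorem is therefore the claim that for every prime $p\geq 23$ other than $37, 43, 67, 163$, the only rational points on $X_0(p)$ are the two cusps.

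To treat the remaining primes I would follow Mazur and pass to the Jacobian $J_0(p)$, using the Abel--Jacobi embedding $\iota\colon X_0(p)\hookrightarrow J_0(p)$ given by $P\mapsto[P-\infty]$. The key tool is an analysis of the Hecke algebra acting on $J_0(p)$: Mazur constructs the so-called \emph{Eisenstein quotient} $\widetilde{J}$ of $J_0(p)$ and proves that $\widetilde{J}(\Q)$ is finite and in fact generated by the image of the cuspidal divisor $[(0)-(\infty)]$. One then shows that any rational point on $Y_0(p)$ must map into this finite cuspidal subgroup, and derives a contradiction by reducing modulo a small auxiliary prime $\ell\neq p$ and exploiting the strong constraints a $p$-isogeny imposes on the mod-$\ell$ Galois representation of the corresponding elliptic curve, together with the description of cusps as degenerate N\'eron polygons.

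I expect the main obstacle to be the finiteness of $\widetilde{J}(\Q)$. This rests on a detailed study of the Eisenstein ideal in the Hecke algebra of $J_0(p)$ and a descent through an isogeny whose kernel is controlled by a piece of the cuspidal subgroup, and it is the deepest ingredient in Mazur's proof. Once finiteness is in hand, the geometric identification of the exceptional primes $37, 43, 67, 163$, which correspond to isolated non-cuspidal rational points coming largely from elliptic curves with complex multiplication by orders of class number $1$, is a comparatively concrete additional task, but still requires explicit arithmetic input to verify that no further sporadic rational points exist on $X_0(p)$ for the other primes.
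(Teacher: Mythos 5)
Your proposal correctly identifies the opening move (rational $p$-isogenies correspond to non-cuspidal points of $X_0(p)(\Q)$, small primes handled via genus), and you correctly identify the Eisenstein quotient of $J_0(p)$ and the finiteness of its Mordell--Weil group as the deepest input. However, the way you deploy the Eisenstein quotient does not match what it can actually deliver, and the entire second half of the argument is missing.

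In the paper, composing Abel--Jacobi with the projection to the rank-zero Eisenstein quotient and invoking a \emph{formal immersion} at the cusp in characteristic $q$ does \emph{not} directly show $Y_0(p)(\Q)=\emptyset$. It shows something weaker: if $x\in X_0(p)(\Q)$ reduces to a cusp modulo some odd prime $q\neq p$, then $x$ \emph{is} that cusp. Via the $j$-map this translates into the statement that any elliptic curve $E/\Q$ with a rational $p$-isogeny (for $p>19$) has \emph{potentially good reduction} at every prime $q\notin\{2,p\}$. That is the entire output of the Eisenstein-quotient stage. Your sentence ``one then shows that any rational point on $Y_0(p)$ must map into this finite cuspidal subgroup, and derives a contradiction'' skips over the fact that no contradiction is available at this point; you have only constrained the reduction types of $E$.

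The actual elimination of primes happens in a second, Galois-theoretic stage that your proposal does not contain and mischaracterizes as a ``comparatively concrete additional task.'' One studies the isogeny character $\lambda\colon G_\Q\to\mathbb{F}_p^\times$ attached to $\ker\varphi$, uses local analysis at $p$ (Tate curves, Raynaud's theorem, the classification of tame inertia characters) plus the everywhere-potentially-good-reduction statement to prove $\lambda^{12}=\chi_p^s$ for $s\in\{0,4,6,8,12\}$, and then exploits that $\lambda(\sigma_q)$ is simultaneously a root of $X^{12}-q^s$ and of the integral Frobenius characteristic polynomial $X^2-a_qX+q$ with $|a_q|\le 2\sqrt{q}$. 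The resultant of these two polynomials, computed for $q=3,5$, forces $p\le 37$ when $s\neq 6$. The remaining case $s=6$ is genuinely delicate: the resultant vanishes identically, and one instead proves that every odd prime $q$ with $q<p/4$ is inert in $\Q(\sqrt{-p})$, hence that $\Q(\sqrt{-p})$ has class number one, giving $p\in\{43,67,163\}$. So the list $\{37,43,67,163\}$ is the \emph{output} of this analysis, not a set of primes one checks separately after the fact; nothing in your sketch produces it. Finally, your auxiliary prime should be a prime $q$ of reduction, not a prime $\ell$ controlling a mod-$\ell$ representation; the relevant representation throughout is $\overline{\rho}_{E,p}$.
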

\endgroup

Mazur's isogeny theorem is important for several reasons. Apart from significantly furthering our understanding of elliptic curves, its proof introduced many deep and original concepts that still play a crucial role in modern research. Furthermore, Mazur's isogeny theorem provides a key step in the proof of Fermat's Last Theorem (and its many variants and generalisations).

The aim of this expository paper is to overview the proof of Mazur's isogeny theorem. We will, for the most part, follow Mazur's original proof of this result, omitting many of the technical details and focusing on making the proof as accessible as possible. We hope that a reader with some knowledge of elliptic curves, algebraic geometry, and number theory (say to the level of a beginning graduate student) will be able to follow our exposition of the proof. There are several expository articles that have been written covering parts of the material we present here (see \citep{dar, edix, mazurnotes, mazur_serre, reb, serre_b}), and we hope that our modern and simplified exposition will serve as a complement to the existing literature.

\bigskip

\noindent  I would like to thank the Bhaskaracharya Pratishthana Institute for the opportunity to give a talk on Mazur's isogeny theorem, from which this paper stems. I would also like to thank my supervisors, Samir Siksek and Damiano Testa, for helping me understand many details of the proof of Mazur's isogeny theorem. Finally, I would like to thank the anonymous referee for a careful reading of the paper and some valuable suggestions.


\section{Key concepts}


In this section we cover some important concepts related to isogenies of elliptic curves. For further background we recommend \citep{firstcourse} and \citep{silverman}. Let $E_1$ and $E_2$ be elliptic curves over $\Q$. An \textbf{isogeny} $\varphi:E_1 \rightarrow E_2$ is a non-constant morphism of curves that induces a group homomorphism from $E_1(\overline{\Q})$ to $E_2(\overline{\Q})$. We say that the isogeny $\varphi$ is \textbf{rational} (or defined over $\Q$) if it can be represented by a rational map with coefficients in $\Q$. Equivalently, $\varphi^\tau = \varphi$ for any $\tau \in G_\Q$, where we write $G_\Q = \mathrm{Gal}(\overline{\Q}/\Q)$. The \textbf{degree} of an isogeny is defined to be its degree as a morphism of curves, which is equal to the size of its kernel. If $p$ is a prime, we say that an isogeny is a \textbf{$p$-isogeny} if it has degree $p$. We say that $E_1$ \textbf{admits a rational isogeny} if there exists some elliptic curve $E_3 / \Q$ and a rational isogeny $\varphi : E_1 \rightarrow E_3$.

Let $E / \Q$ be an elliptic curve and let $p$ be a prime. Our aim is to study whether or not $E$ admits a rational $p$-isogeny. We first see how to rephrase this in terms of Galois representations.
Given $n \geq 1$, write $E[p^n] \subset E(\overline{\Q})$ for the $p^n$-torsion points of $E$, and write $T_p(E) = \varprojlim_n E[p^n]$ for the $p$-adic Tate module of $E$. Then $T_p(E) \cong \Z_p \times \Z_p$, and by choosing a $\Z_p$-basis for $T_p(E)$ we obtain the $p$-adic Galois representation attached to $E$, which describes the action of the absolute Galois group $G_\Q$ on $T_p(E)$:
\[ \rho_{E,p} : G_\Q \rightarrow \mathrm{GL}_2(\Q_p). \]
Our main object of study will in fact be the mod $p$ Galois representation, which we denote $\overline{\rho}_{E,p}$, which describes the action of $G_\Q$ on $E[p] \cong \Z / p\Z \times \Z / p\Z$: \[ \overline{\rho}_{E,p} : G_\Q \rightarrow \mathrm{GL}_2(\mathbb{F}_p). \]

\begin{lemma}\label{equiv} Let $E / \Q$ be an elliptic curve and let $p$ be a prime. The following are equivalent:
\begin{enumerate}[(i)]
\item $E$ admits a rational $p$-isogeny;
\item $E(\overline{\Q})$ contains a rational (i.e. $G_\Q$-stable) subgroup of order $p$; 
\item $\overline{\rho}_{E,p}$ is reducible.
\end{enumerate}
\end{lemma}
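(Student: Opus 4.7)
The plan is to prove the three implications (i) $\Rightarrow$ (ii), (ii) $\Rightarrow$ (i), and (ii) $\Leftrightarrow$ (iii). Throughout I will use that in characteristic zero every isogeny is separable, so its degree equals the order of its kernel, and that any subgroup of order $p$ in $E(\overline{\Q})$ consists of $p$-torsion points (by Lagrange) and thus lies inside $E[p] \cong \mathbb{F}_p \times \mathbb{F}_p$.

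For (i) $\Rightarrow$ (ii), given a rational $p$-isogeny $\varphi : E \to E_3$, I would take $C = \ker \varphi$, which has order $p$. For any $\tau \in G_\Q$ and $P \in C$, rationality of $\varphi$ gives $\varphi(P^\tau) = \varphi^\tau(P^\tau) = \varphi(P)^\tau = 0$, so $P^\tau \in C$, and $C$ is the desired $G_\Q$-stable subgroup.

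For (ii) $\Rightarrow$ (i), the main input is the standard quotient construction for elliptic curves: for any finite subgroup $C \subset E(\overline{\Q})$ there exists an elliptic curve $E_3$ and a separable isogeny $\varphi : E \to E_3$ with kernel $C$, given explicitly by Vélu's formulas. When $C$ is $G_\Q$-stable, the symmetric functions in the coordinates of points of $C$ appearing in Vélu's formulas are $G_\Q$-invariant, hence rational, so $E_3$ is defined over $\Q$ and $\varphi$ is rational; its degree equals $|C| = p$. I would cite Vélu for this step; it is by far the main technical obstacle of the lemma and is essentially the only non-formal ingredient in the proof.

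The equivalence (ii) $\Leftrightarrow$ (iii) is then linear algebra: a subgroup of order $p$ of $E(\overline{\Q})$ is the same as a one-dimensional $\mathbb{F}_p$-subspace of $E[p]$, and such a subspace is $G_\Q$-stable precisely when it is preserved by $\overline{\rho}_{E,p}(\tau)$ for every $\tau \in G_\Q$. Since $E[p]$ is two-dimensional, this is by definition the assertion that $\overline{\rho}_{E,p}$ has a non-trivial proper invariant subspace, i.e.\ that $\overline{\rho}_{E,p}$ is reducible.
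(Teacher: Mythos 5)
Your proposal is correct and follows essentially the same route as the paper: kernel of the isogeny for (i)$\Rightarrow$(ii), quotient construction for (ii)$\Rightarrow$(i), and identifying order-$p$ $G_\Q$-stable subgroups with invariant lines in $E[p]$ for (ii)$\Leftrightarrow$(iii). You supply slightly more detail than the paper --- explicitly invoking V\'elu's formulas and checking Galois-stability of the kernel --- which the paper leaves implicit, but the underlying argument is the same.
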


\begin{proof} The kernel of a rational $p$-isogeny is a rational subgroup of order $p$, so (i) implies (ii). Conversely, quotienting by a rational subgroup of order $p$ gives rise to a rational $p$-isogeny.

Next, suppose $\overline{\rho}_{E,p}$ is reducible. By choosing an appropriate basis, say $\{R_1$, $R_2\}$ of $E[p]$, we have \[\overline{\rho}_{E,p} \sim \left( \begin{smallmatrix} \lambda & * \\ 0 & \lambda'  \end{smallmatrix} \right), \] for characters $\lambda, \lambda' : G_\Q \rightarrow \mathbb{F}_p^\times$. Here, $\sim$ denotes an isomorphism of representations. Then for $\tau \in G_\Q$, $R_1^\tau = \lambda(\tau)R_1$, so $\langle R_1 \rangle$ is a rational subgroup, so (ii) implies (iii). Conversely, if $E(\overline{\Q})$ contains a rational subgroup of order $p$, then this is a non-trivial proper $G_\Q$-submodule of $E[p]$, so the representation $\overline{\rho}_{E,p}$ is reducible. \end{proof}

Next, we introduce the notion of a Frobenius element at a prime. Let $q$ be a prime with $q \neq p$. We write $D_q$ for the decomposition group at $q$ and $I_q$ for the inertia group at $q$. We have $I_q \subset D_q \subset G_\Q$, and $D_q / I_q \cong G_{\mathbb{F}_q}$. We define a \textbf{Frobenius element at $q$}, which we write $\sigma_q \in D_q \subset G_\Q$ to be any preimage of the Frobenius endomorphism under the natural quotient map $D_q \rightarrow G_{\mathbb{F}_q}$. We note that a Frobenius element $\sigma_q \in D_q$ is only well-defined up to the inertia group $I_q$ (the kernel of the quotient map $D_q \rightarrow G_{\mathbb{F}_q}$). Although the absolute Galois group $G_\Q$ is a very complex object, these Frobenius elements will give us something concrete to work with.

Another object that will play an important role is the mod $p$ cyclotomic character. Let $\zeta_p \in \overline{\Q}$ denote a primitive $p$th root of unity. For any $\tau \in G_\Q$, $\tau(\zeta_p)$ is also a primitive $p$th root of unity, so  $\zeta_p^\tau = \zeta_p^{a_\tau}$, for some $a_\tau \in \{1, \dots, p-1\}$ . We define the \textbf{mod $p$ cyclotomic character}, $\chi_p$, to be  \begin{align*} \chi_p: G_\Q & \rightarrow \mathbb{F}_p^\times \\ \tau & \mapsto a_\tau, \quad \text{where } \zeta_p^\tau = \zeta_p^{a_\tau}. \end{align*} We now state three important properties of $\chi_p$ that we will use.

\begin{lemma}[{\citep[pp.~386--393]{firstcourse}}]\label{chip} Let $p$ and $q$ be distinct primes and let $\sigma_q$ denote a Frobenius element at $q$. Then \begin{enumerate} [(i)]
\item $\chi_p$ is unramified at $q$ (meaning $\chi_p(I_q) = 1$);
\item $\chi_p(\sigma_q) = q \pmod{p} \in \mathbb{F}_p^\times$;
\item if $E / \Q$ is an elliptic curve, then $\det(\overline{\rho}_{E,p}) = \chi_p$.
\end{enumerate}
\end{lemma}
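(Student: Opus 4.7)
The plan is to treat the three parts separately, using the key observation that $\chi_p$ factors through $\Gal(\Q(\zeta_p)/\Q)$, since by construction $\chi_p(\tau)$ depends only on $\tau|_{\Q(\zeta_p)}$. Standard facts about cyclotomic fields will then handle (i) and (ii), while (iii) will reduce to a computation with the Weil pairing.

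For (i), I would invoke the standard fact from algebraic number theory that the cyclotomic extension $\Q(\zeta_p)/\Q$ is ramified only at $p$ (its discriminant is a power of $p$). Consequently, for any prime $q \neq p$, the inertia group $I_q \subset G_\Q$ acts trivially on $\zeta_p$, giving $\chi_p(I_q) = 1$.

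For (ii), I would fix a prime $\mathfrak{q}$ of $\Z[\zeta_p]$ above $q$, so $D_q$ corresponds to the decomposition group at $\mathfrak{q}$. Since $q \neq p$, the extension $\Z[\zeta_p]/\Z$ is unramified at $q$, so the reduction map sends $\zeta_p$ to a primitive $p$-th root of unity in the residue field. By definition of Frobenius, $\sigma_q$ acts on this residue field as the $q$-th power map, hence $\sigma_q(\zeta_p) \equiv \zeta_p^q \pmod{\mathfrak{q}}$. Combined with $\sigma_q(\zeta_p) = \zeta_p^{\chi_p(\sigma_q)}$, and using that distinct powers of $\zeta_p$ remain distinct modulo $\mathfrak{q}$, this forces $\chi_p(\sigma_q) \equiv q \pmod p$.

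For (iii), I would use the Weil pairing $e_p \colon E[p] \times E[p] \to \mu_p$, which is bilinear, alternating, non-degenerate, and Galois-equivariant. Fixing a basis $\{R_1,R_2\}$ of $E[p]$, let $\zeta = e_p(R_1,R_2)$, a primitive $p$-th root of unity. For any $\tau \in G_\Q$, Galois equivariance gives
\[ e_p(R_1^\tau, R_2^\tau) = e_p(R_1,R_2)^\tau = \zeta^{\chi_p(\tau)}, \]
while bilinearity and the alternating property give the standard identity $e_p(R_1^\tau, R_2^\tau) = \zeta^{\det(\overline{\rho}_{E,p}(\tau))}$. Comparing yields $\det(\overline{\rho}_{E,p}) = \chi_p$. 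The main obstacle here is really an expository one: invoking the Weil pairing and its properties cleanly without reproving them. Parts (i) and (ii) are essentially bookkeeping with well-known facts about $\Q(\zeta_p)$.
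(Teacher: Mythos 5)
The paper states Lemma \ref{chip} without proof, treating all three parts as standard background facts. Your argument is the standard one and is correct: parts (i) and (ii) follow from the ramification theory of $\Q(\zeta_p)/\Q$ and the defining property of Frobenius (with the necessary observation that distinct $p$-th roots of unity remain distinct modulo a prime above $q \neq p$, which you rightly make explicit), and part (iii) is the usual Weil-pairing computation. Since the paper offers no proof to compare against, there is nothing to contrast; your proposal simply supplies the omitted verification, and does so correctly.
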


The final object we introduce is the modular curve $X_0(p)$. This is an algebraic curve defined over $\Q$ whose points parametrise elliptic curves equipped with a $p$-isogeny. The key facts we will need are the following (see \citep[p.~212]{edix} and \citep{firstcourse} for example):
\begin{itemize}
\item $X_0(p)$ is an algebraic curve over $\Q$ and admits a model that has good reduction at every prime $q \neq p$. In the language of schemes, $X_0(p)$ admits a smooth model over $\mathrm{Spec}(\Z[1/p])$. We write $\tilde{x} \in X_0(p)(\mathbb{F}_q)$ for the reduction mod $q$ of a point $x \in X_0(p)(\Q)$.
\item $X_0(p)$ has two distinguished rational points, the cusps, which we denote $\infty,0 \in X_0(p)(\Q)$.
\item If $E / \Q$ is an elliptic curve that admits a rational $p$-isogeny, $\varphi$, then the pair $(E, \varphi)$ gives rise to a rational point $[E, \varphi] = x \in X_0(p)(\Q) \backslash \{ \infty, 0 \}$.
\item $X_0(p)$ comes equipped with the $j$-map, $j:X_0(p) \rightarrow \mathbb{P}^1$, that satisfies $j([E,\varphi]) = j(E)$ and $j(\infty) = j(0) = \infty \in \mathbb{P}^1$ (so the cusps are the poles of the $j$-map). Here, $j(E)$ denotes the $j$-invariant of the elliptic curve.
\item $X_0(p)$ comes equipped with an involution $w_p: X_0(p) \rightarrow X_0(p)$ defined over $\Q$, known as the Atkin--Lehner involution. It satisfies $w_p(\infty) = 0$.
\end{itemize}


\section{Formal immersions and modular curves}


Our aim in this section will be to prove the following result.

\begin{theorem}[Mazur, {\citep[Corollary~4.4]{ratisog}}] \label{mult} Let $E / \Q$ be an elliptic curve and let $p > 19$ be a prime for which $E$ admits a rational $p$-isogeny. Then $E$ has potentially good reduction at any prime $q \notin \{2,p\}$.
\end{theorem}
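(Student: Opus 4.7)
The plan is to argue by contradiction. Suppose $E$ fails to have potentially good reduction at some prime $q \notin \{2, p\}$, so $E$ has potentially multiplicative reduction at $q$. Since a quadratic twist preserves the existence of a rational $p$-isogeny (twisting alters $\overline{\rho}_{E,p}$ only by a character, so preserves reducibility by Lemma~\ref{equiv}), I may reduce to the case where $E$ has multiplicative reduction at $q$ on the nose.

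The next step is to convert this into a statement about the rational point $x = [E, \varphi] \in X_0(p)(\mathbb{Q}) \setminus \{\infty, 0\}$. Using the Tate uniformisation of $E$ at $q$ together with the modular interpretation of $X_0(p)$ as a coarse moduli space of pairs, I would show that multiplicative reduction of $E$ at $q$ forces $x$ to specialise to one of the cusps of $X_0(p)$ modulo $q$. Applying the Atkin--Lehner involution $w_p$ if necessary, which is defined over $\mathbb{Q}$ and interchanges $\infty$ and $0$, I may assume $x$ reduces to $\infty$ modulo $q$.

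The central input is Mazur's deep theorem that, for $p > 19$, the Eisenstein quotient $J_e$ of the Jacobian $J_0(p)$ is such that $J_e(\mathbb{Q})$ is finite. Consider the map $\psi : X_0(p) \to J_e$ obtained by composing $P \mapsto [(P) - (\infty)] \in J_0(p)$ with the quotient $J_0(p) \to J_e$. By construction $\psi(\infty) = 0$, and finiteness of $J_e(\mathbb{Q})$ forces $\psi(x)$ to be torsion. Since both $x$ and $\infty$ reduce to the cusp $\infty$ modulo $q$, and $X_0(p)$ has good reduction at $q \neq p$, the class $\psi(x) - \psi(\infty)$ reduces to $0$ in $J_e(\mathbb{F}_q)$; using that the reduction map is injective on the prime-to-$q$ torsion of $J_e(\mathbb{Q})$ (which is where the exclusion $q \neq 2$ plays its role, in order to know that $q$ is coprime to $\#J_e(\mathbb{Q})_{\mathrm{tors}}$), this yields $\psi(x) = \psi(\infty)$ in $J_e(\mathbb{Q})$.

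The concluding step is a formal immersion argument: I would verify that, for $p > 19$ and $q \neq 2,p$, the map $\psi$ is a formal immersion at the cusp $\infty$ in characteristic $q$, meaning that the induced map of completed local rings at the mod $q$ cusp is surjective. Together with the equality $\psi(x) = \psi(\infty)$ and the fact that $x$ and $\infty$ agree modulo $q$, this forces $x = \infty$ as $\mathbb{Q}$-points of $X_0(p)$, contradicting $x \notin \{\infty, 0\}$. The main obstacle I expect is the verification of the formal immersion criterion: this reduces to a rank computation for the Hecke action on the cotangent space of $J_e$ at $0$, expressed via $q$-expansions of weight $2$ cusp forms at the cusp $\infty$, and it is precisely here that the hypothesis $p > 19$ becomes essential, through the dimension and explicit structure of the Eisenstein quotient.
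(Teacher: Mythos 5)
Your proposal follows the paper's proof almost exactly: reduce to showing that a rational point of $X_0(p)$ reducing mod $q$ to the cusp $\infty$ must be $\infty$ itself, then use the Abel--Jacobi map into the Eisenstein quotient $J_e(p)$, the fact that $J_e(p)(\Q)$ is finite, injectivity of reduction on torsion, and a formal immersion argument at the cusp. A few small points are worth flagging. The quadratic-twist reduction is unnecessary: the condition $v_q(j(E)) < 0$ already forces $[E,\varphi]$ to specialise to a cusp mod $q$, regardless of whether $E$ or only a twist of $E$ has multiplicative reduction, which is exactly how the paper argues via the $j$-map. Your explanation of the role of $q \neq 2$ is slightly off: the paper invokes injectivity of the reduction map on the \emph{entire} torsion subgroup of an abelian variety with good reduction at $q > 2$ (Katz's appendix), not injectivity on prime-to-$q$ torsion combined with a coprimality claim about $\#J_e(\Q)_{\mathrm{tors}}$ --- the latter claim would require separate justification and is not obviously true. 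Finally, the hypothesis $p > 19$ is not used in verifying the formal immersion criterion (the paper's Proposition~\ref{form_inf} holds for any non-trivial quotient of $J_0(p)$ and any $q \neq 2, p$); rather, it guarantees, via Mazur's theorem on the Eisenstein ideal, that $J_e(p)$ is a non-trivial abelian variety of Mordell--Weil rank $0$. None of these points affects the validity of the overall strategy, which is the same as the paper's.
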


We recall that an elliptic curve $E/ \Q$ has potentially good reduction at a prime $q$ if $v_q(j(E)) \geq 0$, and potentially multiplicative reduction at a prime $q$ if $v_q(j(E)) < 0$. Theorem \ref{mult} in fact holds if $q = p$, and also for primes $p =11$ or $p > 13$. We will restrict to $p > 19$ since the primes $p \leq 19$ appear in Theorem \ref{Main} anyway.

The proof of Theorem \ref{mult} forms the majority of the proof of Mazur's torsion theorem \citep[Theorem~8]{eisenstein}, which classifies the possible torsion subgroups of elliptic curves defined over the rationals. The expository papers \citep{dar, edix, reb} discuss Mazur's torsion theorem and its generalisations to number fields in some detail. 

We start by seeing how we can deduce Theorem \ref{mult} from a statement about the modular curve $X_0(p)$, using the facts listed at the end of the previous section. Suppose $E$ is an elliptic curve with a rational $p$-isogeny $\varphi$ with $p > 19$, and suppose (for a contradiction) that $q \neq 2,p$ is a prime of potentially multiplicative reduction for $E$. Write $x = [E,\varphi] \in X_0(N)(\Q)$ for the non-cuspidal point that the pair $(E,\varphi)$ gives rise to. Then $v_q(j(x)) = v_q(j(E)) < 0$, and it follows that $\tilde{x} = \tilde{\infty}$ or $\tilde{0}$ in  $X_0(p)(\mathbb{F}_q)$, where we recall that $\sim$ denotes reduction mod $q$. By applying the Atkin--Lehner involution $w_p$ to $x$ if necessary (which swaps the cusps), we may assume that $\tilde{x} = \tilde{\infty}$. Since $x \neq \infty$, we may deduce Theorem \ref{mult} from the following proposition.

\begin{proposition}\label{multprop} Let $p > 19$ be a prime and let $x \in X_0(p)(\Q)$. Let $q \neq 2,p$ be a prime and suppose that $\tilde{x} = \tilde{\infty}$ in $X_0(p)(\mathbb{F}_q)$. Then $x = \infty$.
\end{proposition}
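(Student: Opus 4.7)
The plan is to deduce the proposition from a formal immersion argument involving the Jacobian $J_0(p)$ of $X_0(p)$ and its Eisenstein quotient, following Mazur. Write $f : X_0(p) \to J_0(p)$ for the Abel--Jacobi morphism $P \mapsto [P - \infty]$, let $\pi : J_0(p) \to A$ be the quotient onto the Eisenstein quotient $A$, and set $\phi = \pi \circ f$. Because $q \neq p$, the curve $X_0(p)$ has a smooth model $\mathcal{X}_0(p)$ over $\Z_{(q)}$, the abelian variety $A$ has good reduction at $q$, and both $x$ and $\infty$ extend uniquely to $\Z_{(q)}$-points of $\mathcal{X}_0(p)$; by hypothesis these two sections have the same reduction $\tilde{\infty}$ modulo $q$.

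The first ingredient is Mazur's celebrated theorem that $A(\Q)$ is finite, valid for every prime $p \geq 11$ with $p \neq 13$. Consequently $\phi(x) \in A(\Q)$ is torsion, and its reduction modulo $q$ coincides with $\widetilde{\phi(\infty)} = 0$ in $\tilde{A}(\mathbb{F}_q)$. Since the absolute ramification index of $\Q_q$ equals $1 < q - 1$ (which is where the hypothesis $q \neq 2$ is used), the specialisation map $A(\Q)_{\mathrm{tors}} \to \tilde{A}(\mathbb{F}_q)$ is injective, forcing $\phi(x) = 0 = \phi(\infty)$ in $A(\Z_{(q)})$.

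The second, and main, ingredient is the formal immersion property: the mod-$q$ reduction $\tilde{\phi} : \tilde{\mathcal{X}}_0(p) \to \tilde{A}$ must be shown to be a formal immersion at $\tilde{\infty}$, i.e.\ the cotangent pullback $\tilde{\phi}^\ast : \mathrm{Cot}_0(\tilde{A}) \to \mathrm{Cot}_{\tilde{\infty}}(\tilde{\mathcal{X}}_0(p))$ must be surjective. Under the classical identification of $\mathrm{Cot}_0(J_0(p))$ with the space of weight-$2$ cusp forms on $\Gamma_0(p)$, and of the cotangent pullback at $\infty$ with the first-Fourier-coefficient map $f \mapsto a_1(f)$, this reduces to exhibiting a cusp form in the subspace of $S_2(\Gamma_0(p))$ cut out by the Eisenstein ideal whose first Fourier coefficient does not vanish modulo $q$. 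Establishing this, via the $q$-expansion principle together with a careful analysis of the Eisenstein ideal and of the cuspidal subgroup of $J_0(p)$, is by far the hardest step of the proof, and is precisely where the assumption $p > 19$ is used in an essential way.

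Finally, with $\tilde{\phi}$ a formal immersion at $\tilde{\infty}$ and $\phi(x) = \phi(\infty)$ in $A(\Z_{(q)})$, the standard formal immersion criterion forces the two $\Z_{(q)}$-sections $x$ and $\infty$ of $\mathcal{X}_0(p)$ to coincide, yielding $x = \infty$ in $X_0(p)(\Q)$.
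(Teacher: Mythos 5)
Your proposal follows essentially the same route as the paper: compose the Abel--Jacobi map based at $\infty$ with the quotient to the Eisenstein quotient, use finiteness of the Mordell--Weil group of that quotient together with injectivity of reduction on torsion (valid since $q>2$) to get $\phi(x)=\phi(\infty)$, then invoke the formal immersion property at $\tilde\infty$ to conclude $x=\infty$. One small misattribution worth correcting: the formal immersion property at $\tilde\infty$ holds for \emph{any} non-trivial quotient $A_p$ of $J_0(p)$ and any $q\neq 2,p$ (the proof just picks a Hecke eigenform, which is normalized to have $a_1=1$, so the pullback on cotangent spaces is automatically surjective mod $q$); the hypothesis $p>19$ is not used there. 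The role of $p>19$ in the paper is simply to discard the small primes already on Mazur's list, while the argument itself works whenever $J_0(p)$ is non-trivial and its Eisenstein quotient is non-trivial of rank~$0$ (i.e.\ for $p=11$ or $p>13$). Relatedly, the genuinely deep input is not the formal immersion computation but Mazur's theorem that the Eisenstein quotient is non-trivial of rank~$0$ over $\Q$, which your proposal cites but understates.
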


We will prove this proposition by constructing a certain quotient of the Jacobian of $X_0(p)$ and by using the theory of formal immersions.


\subsection{Formal immersions}


In order to discuss formal immersions, we will appeal to the language of schemes. In particular, for a prime $q$, we will work with a morphism $f : X \rightarrow Y$ of schemes over $\Z_q$. We refer to \citep{derthesis} for a more general set-up and further background on formal immersions. For readers less familiar with the language of schemes, it is reasonable to think of $X$ and $Y$ as varieties over $\Q$ together with their mod $q$ reductions. Given a point $x \in X$, we will write $\mathcal{O}_{X,x}$ for the local ring at $x$, and denote by $\mathfrak{m}_x$ its maximal ideal. We write $\hat{\mathcal{O}}_{X,x}$ for its completion, which we may identify with the ring of power series $\Z_q[[u_1, \dots, u_t]]$, where the $u_i$ are local parameters at $x$. We write $\hat{\mathfrak{m}}_x = \mathfrak{m}_x \cdot \hat{\mathcal{O}}_{X,x}$ for the maximal ideal of the completion. Also, we will write $\mathrm{Cot}_x(X) = \mathfrak{m}_x / \mathfrak{m}_x^2$ for the cotangent space of $X$ at $x$, which can be viewed as the space of functions that vanish exactly once at $x$.

The morphism $f : X \rightarrow Y$ induces the following pullback maps: \begin{align*} f^*: \mathcal{O}_{Y,f(x)} & \longrightarrow \mathcal{O}_{X,x}, \\
 f^* : \mathrm{Cot}_{f(x)}(Y) & \longrightarrow \mathrm{Cot}_x(X), \\
\hat{f}^*: \hat{\mathcal{O}}_{Y,f(x)} & \longrightarrow \hat{\mathcal{O}}_{X,x}.
\end{align*} 
We say that $f$ is a \textbf{formal immersion} at $x \in X$ if \[ \hat{f}^*: \hat{\mathcal{O}}_{Y,f(x)} \rightarrow \hat{\mathcal{O}}_{X,x} \] is surjective. 

\begin{lemma}\label{form_inj} Let $f : X \rightarrow Y$ be a morphism of schemes over $\Z_q$ such that $f$ is a formal immersion at $x \in X(\mathbb{F}_q)$. Let $P, Q \in X(\Z_q)$ be such that $\tilde{P} = \tilde{Q} = x \in X(\mathbb{F}_q)$ and suppose $f(P) = f(Q)$. Then $P = Q$.
\end{lemma}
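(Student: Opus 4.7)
The plan is to translate the geometric statement into one about $\Z_q$-algebra homomorphisms between completed local rings, where the surjectivity of $\hat{f}^*$ can be applied directly. A $\Z_q$-point $P \in X(\Z_q)$ whose reduction $\tilde{P}$ equals $x$ corresponds to a morphism $\mathrm{Spec}(\Z_q) \to X$ that factors through $\mathrm{Spec}(\mathcal{O}_{X,x})$, and hence, by continuity and the completeness of $\Z_q$, through $\mathrm{Spec}(\hat{\mathcal{O}}_{X,x})$. Thus $P$ yields a $\Z_q$-algebra homomorphism $P^\# : \hat{\mathcal{O}}_{X,x} \to \Z_q$, and likewise for $Q$ a map $Q^\# : \hat{\mathcal{O}}_{X,x} \to \Z_q$.

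Next I would unwind what $f(P) = f(Q)$ means in terms of these pullbacks. By functoriality, $f(P)$ and $f(Q)$ are both $\Z_q$-points of $Y$ that reduce to $f(x)$, and the homomorphisms they induce on $\hat{\mathcal{O}}_{Y,f(x)}$ are $P^\# \circ \hat{f}^*$ and $Q^\# \circ \hat{f}^*$ respectively. The hypothesis $f(P) = f(Q)$ therefore becomes the equality of algebra maps
\[ P^\# \circ \hat{f}^* \;=\; Q^\# \circ \hat{f}^* : \hat{\mathcal{O}}_{Y,f(x)} \longrightarrow \Z_q. \]

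The conclusion now follows immediately from the formal immersion hypothesis: since $\hat{f}^*$ is surjective, any two $\Z_q$-algebra homomorphisms out of $\hat{\mathcal{O}}_{X,x}$ that agree on the image of $\hat{f}^*$ must agree everywhere. Hence $P^\# = Q^\#$, and translating back along the bijection between $\Z_q$-points reducing to $x$ and such algebra homomorphisms gives $P = Q$.

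The main subtlety I expect is the first step: justifying that a $\Z_q$-point reducing to $x$ really does factor uniquely through the completion $\hat{\mathcal{O}}_{X,x}$. This relies on the observation that any local homomorphism $\mathcal{O}_{X,x} \to \Z_q$ sends $\mathfrak{m}_x$ into the maximal ideal $q\Z_q$, is therefore continuous with respect to the $\mathfrak{m}_x$-adic and $q$-adic topologies, and so extends uniquely to $\hat{\mathcal{O}}_{X,x}$ by the $q$-adic completeness of $\Z_q$. Once this dictionary between $\Z_q$-points and continuous algebra maps is set up cleanly, the rest of the argument is purely formal.
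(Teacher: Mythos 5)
Your proof is correct and is essentially the same argument as the paper's, just phrased dually: the paper picks a function $u \in \mathcal{O}_{X,x}$, lifts it via surjectivity of $\hat{f}^*$ to $w$, and evaluates to get $u(P) = w(f(P)) = w(f(Q)) = u(Q)$, which is precisely the statement $P^\# \circ \hat{f}^* = Q^\# \circ \hat{f}^*$ applied to $w$. Your version in terms of $\Z_q$-algebra homomorphisms out of $\hat{\mathcal{O}}_{X,x}$ is the standard scheme-theoretic reformulation, and you are in fact a bit more explicit than the paper about the step the paper glosses over — namely that a $\Z_q$-point reducing to $x$ factors through $\mathrm{Spec}(\hat{\mathcal{O}}_{X,x})$ by $\mathfrak{m}_x$-adic continuity and completeness of $\Z_q$.
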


This lemma may be viewed as an analogue of Hensel's lemma. It says that if two points agree as $\mathbb{F}_q$-points, and that some further conditions are satisfied (which we may compare with the derivative criterion of Hensel's lemma), then the points agree as $\Z_q$-points. 

\begin{proof} We may restrict to the case of $X$ and $Y$ being affine schemes. In order to verify that $P = Q$, we will check that the functions on $X$ evaluated at $P$ and $Q$ take the same values. Let $u \in \mathcal{O}_{X,x}$ be any regular function at $x$. The fact that $\tilde{P} = \tilde{Q} = x$ means that $u$ is also regular at $P$ and $Q$, and it will be enough to show that $u(P) = u(Q)$. We view $u \in \hat{\mathcal{O}}_{X,x}$ via the inclusion $\mathcal{O}_{X,x} \hookrightarrow \hat{\mathcal{O}}_{X,x}$. The fact that $f$ is a formal immersion at $x$ means that we can choose $w \in  \hat{\mathcal{O}}_{Y,f(x)}$ such that $\hat{f}^*(w) = u$. Then \[ u(P) = \hat{f}^*(w)(P) = w(f(P)) = w(f(Q)) = \hat{f}^*(w)(Q) = u(Q), \] as required. 
\end{proof}

Lemma \ref{form_inj} is the key property of formal immersions that we will exploit. However, in order to check that a certain map is a formal immersion, we will do so by using the following lemma.

\begin{lemma} \label{form_cot} Let $f : X \rightarrow Y$ be a morphism of schemes over $\Z_q$. Let $x \in X$ and suppose that $x$ and $f(x)$ have the same residue field. Then $f$ is a formal immersion at $x$ if and only if the map $ f^* : \mathrm{Cot}_{f(x)}(Y)  \rightarrow \mathrm{Cot}_x(X)$ is surjective.
\end{lemma}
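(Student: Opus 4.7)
The plan is to reduce the statement to a general algebraic fact about local homomorphisms of complete Noetherian local rings, and then invoke Nakayama's lemma followed by a successive approximation argument using completeness. First I would observe that completion does not change the cotangent space: since $\hat{\mathcal{O}}_{X,x}/\hat{\mathfrak{m}}_x^n \cong \mathcal{O}_{X,x}/\mathfrak{m}_x^n$ for every $n$, the natural map $\mathfrak{m}_x/\mathfrak{m}_x^2 \to \hat{\mathfrak{m}}_x/\hat{\mathfrak{m}}_x^2$ is an isomorphism, and likewise at $f(x)$. So the cotangent-space condition in the statement is the same as surjectivity of the induced map $\hat{\mathfrak{m}}_{f(x)}/\hat{\mathfrak{m}}_{f(x)}^2 \to \hat{\mathfrak{m}}_x/\hat{\mathfrak{m}}_x^2$, and the claim reduces to the following: a local homomorphism $\phi : A \to B$ between complete Noetherian local rings with the same residue field is surjective if and only if the induced map $\mathfrak{m}_A/\mathfrak{m}_A^2 \to \mathfrak{m}_B/\mathfrak{m}_B^2$ is surjective.

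The ``only if'' direction is immediate, since any surjective ring homomorphism induces a surjection on the quotient $\mathfrak{m}/\mathfrak{m}^2$. For the converse, suppose the cotangent map is surjective. Then $\phi(\mathfrak{m}_A)B + \mathfrak{m}_B^2 = \mathfrak{m}_B$, and applying Nakayama's lemma to the finitely generated $B$-module $\mathfrak{m}_B$ (using that $B$ is Noetherian) yields $\phi(\mathfrak{m}_A)B = \mathfrak{m}_B$, and more generally $\phi(\mathfrak{m}_A)^n B = \mathfrak{m}_B^n$ for every $n$.

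To upgrade this to surjectivity of $\phi$ as a ring map, I would run a successive approximation argument using completeness of $A$. Given $b \in B$, I would first lift its image in the common residue field $k$ to some $a_0 \in A$, so that $b - \phi(a_0) \in \mathfrak{m}_B$. Inductively, if $b - \phi(a_0 + \cdots + a_n) \in \mathfrak{m}_B^{n+1}$, I would use $\mathfrak{m}_B^{n+1} = \phi(\mathfrak{m}_A^{n+1})B$ to write the remainder as $\sum \phi(m_i) b_i$ with $m_i \in \mathfrak{m}_A^{n+1}$, then approximate each $b_i$ modulo $\mathfrak{m}_B$ by an element of $\phi(A)$, obtaining an $a_{n+1} \in \mathfrak{m}_A^{n+1}$ with $b - \phi(a_0 + \cdots + a_{n+1}) \in \mathfrak{m}_B^{n+2}$. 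The partial sums are Cauchy in the $\mathfrak{m}_A$-adic topology, so by completeness of $A$ they converge to some $a \in A$. The main technical point, and the step requiring the most care, is verifying that $\phi(a) = b$; this follows from continuity of $\phi$ for the $\mathfrak{m}$-adic topologies, a consequence of $\phi$ being a local homomorphism between complete Noetherian local rings. This completeness-driven bootstrapping from ``surjective modulo $\mathfrak{m}_B^2$'' to ``surjective'' is the main obstacle in the argument.
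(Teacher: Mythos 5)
Your proof is correct and follows essentially the same strategy as the paper's: both reduce the hard direction to Nakayama's lemma (to get $\phi(\mathfrak{m}_A)B = \mathfrak{m}_B$, equivalently that the images of a cotangent basis generate $\mathfrak{m}_x$) and then leverage completeness. The only difference is in how the final step is handled: the paper shows the map on associated graded rings is surjective and then cites Atiyah--MacDonald (Lemma 10.23) to conclude surjectivity of $\hat f^*$, whereas you unwind that cited lemma and give its standard successive-approximation proof directly. This makes your argument more self-contained, and it also makes explicit where the ``same residue field'' hypothesis enters (twice: to start the induction by lifting $b \bmod \mathfrak{m}_B$, and then to approximate each coefficient $b_i$ by an element of $\phi(A)$), a point the paper leaves implicit in the phrase ``Since $x$ and $f(x)$ have the same residue field.'' One small remark: at the very end, concluding $\phi(a) = b$ from $\phi(a) - b \in \bigcap_n \mathfrak{m}_B^{n+1}$ uses that this intersection is zero (Krull's intersection theorem for the Noetherian local ring $B$), which is worth saying explicitly since ``continuity of $\phi$'' alone only gives you $\phi(a) - b \in \mathfrak{m}_B^{n+1}$ for all $n$.
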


\begin{proof} If $\hat{f}^*: \hat{\mathcal{O}}_{Y,f(x)}  \rightarrow \hat{\mathcal{O}}_{X,x}$ is surjective, then $\hat{f}^*(\hat{\mathfrak{m}}_{f(x)}) = \hat{\mathfrak{m}}_{x}$ and it follows that $f^*(\mathfrak{m}_{f(x)}) = \mathfrak{m}_x$, so $f^*$ will be surjective on the cotangent spaces. 

For the converse, we first choose elements  $u_1, \dots, u_t \in\mathfrak{m}_{f(x)}$ such that the $u_i \pmod{\mathfrak{m}_{f(x)}^2}$ span the cotangent space $\mathrm{Cot}_{f(x)}(Y)$. Since $f^*$ is surjective on  $\mathrm{Cot}_{f(x)}(Y)$, the elements $f^*(u_i) \pmod{\mathfrak{m}_x^2}$ span $\mathrm{Cot}_x(X)$. Then, by Nakayama's lemma (as stated in \citep[Proposition~2.8]{AM} for example), the elements $f^*(u_i)$ generate $\mathfrak{m}_x$. It follows that $f^*(\mathfrak{m}^n_{f(x)}) = \mathfrak{m}_x^n$ for any $n \geq 1$. We then see that for any $n \geq 1$ we have a surjection \[ f^* : \frac{\mathfrak{m}^{n}_{f(x)}}{\mathfrak{m}^{n+1}_{f(x)}} \longrightarrow  \frac{\mathfrak{m}^{n}_{x}}{\mathfrak{m}^{n+1}_x}. \] Since $x$ and $f(x)$ have the same residue field, the map induced by $f^*$ between the associated graded rings of $\mathcal{O}_{Y, f(x)}$ and $\mathcal{O}_{X,x}$ is surjective, and it follows that the map $\hat{f}^*$ on the completed local rings is surjective by \citep[Lemma~10.23]{AM}.
\end{proof}


\subsection{The Jacobian and the Eisenstein quotient}


We now return to the notation used at the start of this section, namely that $x \in X_0(p)(\Q)$, and $\tilde{x} = \tilde{\infty}$ in $X_0(p)(\mathbb{F}_q)$. We would like to mimic the set up of Lemma \ref{form_inj} with the points $x$ and $\infty$, which we now view as $\Z_q$ points on $X_0(p)$. In particular, we need to construct a map $f : X_0(p) \rightarrow Y$, for some $Y$, which is a formal immersion at $\tilde{x}$ and satisfies $f(x) = f(\infty)$. Focusing on the second condition, a natural first step is to consider the Abel--Jacobi map with base point $\infty$: \begin{align*} \iota: X_0(p)  & \longrightarrow J_0(p) \\ y & \longmapsto [y - \infty]. \end{align*} Here, $J_0(p)$ denotes the Jacobian of $X_0(p)$, and $[y-\infty]$ denotes the divisor class of $y-\infty$. Although $\iota(x) = [x- \infty]$ and $\iota(\infty) = [0]$ need not be equal, we do have that their reductions mod $q$ are equal: \[ \widetilde{\iota(x)} = \widetilde{\iota(\infty)}. \] Now, if we knew that $\iota(x) \in J_0(p)(\Q)_{\mathrm{tors}}$, then by injectivity of reduction on the torsion of an abelian variety \citep[Appendix]{katz}, we would be able to conclude that $\iota(x) = \iota(\infty)$.

Unfortunately, $J_0(p)(\Q)$ need not be finite; that is, its \emph{rank} over $\Q$ may be greater than $0$. Nevertheless, if $A_p$ is an abelian variety that is a quotient of $J_0(p)$, so that we have a map $J_0(p) \rightarrow A_p$, then composing with the Abel--Jacobi map, we obtain a map \[ f_p : X_0(p) \rightarrow A_p. \] If we can choose $A_p$ such that its rank is $0$ over $\Q$, then applying the same argument as above, we could conclude that $f_p(x) = f_p(\infty)$. 

Before seeing whether such a rank $0$ quotient exists, let us see that the map $f_p$ is a formal immersion when $A_p$ is an \textbf{optimal} quotient, with the word \emph{optimal} meaning that both $A_p$ and the kernel of the quotient map from $J_0(p)$ to $A_p$ are abelian varieties (see \citep[p.~140]{ratisog}).

\begin{proposition}[{\citep[Proposition~3.2]{ratisog}}] \label{form_inf} Let $q \neq {2,p}$ be a prime. Let $A_p$ be a non-trivial optimal quotient of $J_0(p)$. Then the map $f_p: X_0(p) \rightarrow A_p$ is a formal immersion at $\tilde{\infty} \in X_0(p)(\mathbb{F}_q)$ (we say that $f_p$ is a formal immersion at $\infty$ in characteristic $q$).
\end{proposition}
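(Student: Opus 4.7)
My approach is to apply Lemma \ref{form_cot} to reduce the claim to the surjectivity of the cotangent map $f_p^* : \mathrm{Cot}_{\tilde{0}}(A_p) \to \mathrm{Cot}_{\tilde{\infty}}(X_0(p))$. The hypothesis $q \neq p$ ensures that $X_0(p)$ has good reduction at $q$, that $J_0(p)$ extends to an abelian scheme over $\mathbb{Z}_q$, and hence so does the quotient $A_p$; in particular both $\tilde{\infty}$ and $f_p(\tilde{\infty}) = \tilde{0}$ have residue field $\mathbb{F}_q$, so the hypothesis of Lemma \ref{form_cot} is satisfied. Since $X_0(p)$ is one-dimensional, the target $\mathrm{Cot}_{\tilde{\infty}}(X_0(p))$ is free of rank one over $\mathbb{Z}_q$, so surjectivity amounts to producing a single cotangent vector on $A_p$ whose pullback is a $\mathbb{Z}_q$-generator.

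The map $f_p$ factors as $\pi \circ \iota$, where $\iota$ is the Abel--Jacobi embedding and $\pi : J_0(p) \to A_p$ is the quotient, so $f_p^* = \iota^* \circ \pi^*$. The map $\pi^* : \mathrm{Cot}_{\tilde{0}}(A_p) \hookrightarrow \mathrm{Cot}_{\tilde{0}}(J_0(p))$ is injective (dual to $\pi$ being surjective on tangent spaces), with image a nonzero saturated $\mathbb{Z}_q$-submodule $V$. It therefore suffices to show that $\iota^*$ is nonzero on $V$ modulo $q$. Now I would invoke the classical identification $\mathrm{Cot}_{\tilde 0}(J_0(p)) \cong S_2(\Gamma_0(p); \mathbb{Z}_q)$ of invariant differentials with integral weight-two cusp forms, together with the fact that the Tate parameter serves as a local uniformizer at $\tilde{\infty}$ on the integral model of $X_0(p)$. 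Under these identifications, $\iota^*$ becomes the functional ``first Fourier coefficient at $\infty$'', sending a cusp form $f = \sum_{n \geq 1} a_n u^n$ to $a_1$. The problem reduces to exhibiting some $f \in V$ with $a_1(f) \in \mathbb{Z}_q^\times$.

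For this final step, I invoke the classical fact that, at prime level $p$, the Hecke algebra $\mathbb{T}$ and $S_2(\Gamma_0(p); \mathbb{Z}_q)$ are in perfect duality via the pairing $(T, f) \mapsto a_1(T f)$ --- at least for $q \neq 2$, which accounts for the exclusion of $q = 2$ in the hypothesis. After replacing $A_p$ by an isogenous Hecke-equivariant quotient (which leaves $V \otimes \mathbb{Q}_q$ unchanged), the submodule $V$ becomes Hecke-stable and hence contains a normalised newform $f_0$ with $a_1(f_0) = 1 \in \mathbb{Z}_q^\times$. This gives the required surjectivity and hence the formal immersion property. The main obstacle I anticipate is the integral analysis underlying this step: carefully justifying the perfect-pairing assertion over $\mathbb{Z}_q$, and verifying that the isogeny replacement preserves the $\mathbb{Z}_q$-saturation of $V$ rather than only its $\mathbb{Q}_q$-structure.
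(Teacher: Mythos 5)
Your proposal follows essentially the same route as the paper's sketch: reduce to surjectivity of the cotangent pullback via Lemma \ref{form_cot}, identify $\mathrm{Cot}_{\tilde 0}(J_0(p))$ with weight-two cusp forms, observe that $\iota^*$ becomes the ``first Fourier coefficient'' functional, and conclude using the Hecke action. Where the paper's sketch is content to pick a Hecke eigenform in the image and observe that $a_1 \neq 0$ (since $a_1 = 0$ forces all $a_n = 0$), you invoke the perfect pairing $\mathbb{T} \times S_2(\Gamma_0(p);\Z) \to \Z$, $(T,f) \mapsto a_1(Tf)$, to get $a_1$ to be a $q$-adic \emph{unit}. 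This is actually a genuine improvement in precision: the paper's ``nonzero'' conclusion only obviously works over a field, and the integrality needed mod $q$ is exactly the content of the perfect pairing. So you are right that the integral version is what does the work.

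Two points deserve correction or caution. First, your step ``replace $A_p$ by an isogenous Hecke-equivariant quotient'' is not justified and does not hold for a general quotient of $J_0(p)$; not every quotient lies in the isogeny class of a Hecke-stable one. What is really needed is that $A_p$ itself be a quotient by a Hecke-stable subvariety, which is implicit in both the paper's sketch (you cannot choose a Hecke eigenform in the image of $\pi^*$ otherwise) and Mazur's original statement, and holds for the Eisenstein quotient used in the application. It would be cleaner to add the Hecke-stability of $\ker(\pi)$ as a hypothesis rather than try to manufacture it. Second, the perfect pairing $\mathbb{T} \times S_2(\Gamma_0(p);\Z) \to \Z$ is perfect integrally, including at $2$, so it does not ``account for'' the exclusion of $q = 2$. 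The role of $q \neq 2$ in this proposition is elsewhere: it ensures that the cotangent map $\mathrm{Cot}_{\tilde 0}(A_p) \hookrightarrow \mathrm{Cot}_{\tilde 0}(J_0(p))$ on the integral models is injective with saturated image (an issue with the reduction of the kernel of $J_0(p) \to A_p$ in small residue characteristic), which the paper flags at the start of its sketch.
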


\begin{proof}[Proof (sketch)] We have that $f_p(\tilde{\infty}) = \tilde{0} \in A_p(\mathbb{F}_q)$ (where $\tilde{0}$ denotes the identity element of $A_p(\mathbb{F}_q)$ rather than the the reduction of the zero cusp), and so we would like to prove that the map \[f_p^* : \mathrm{Cot}_{\tilde{0}}(A_p) \rightarrow \mathrm{Cot}_{\tilde{\infty}}(X_0(p))\] is surjective.

Since $q >2$, by \citep[Corollary~1.1]{ratisog} we have an injection  $\mathrm{Cot}_{\tilde{0}}(A_p) \hookrightarrow \mathrm{Cot}_{\tilde{0}}(J_0(p))$. Using \citep[p.~214]{edix}, we may identify $\mathrm{Cot}_{\tilde{0}}(J_0(p))$ with the $\Z_q$-module of cusp forms of weight $2$ on $\Gamma_0(p)$, and we may view $\underline{q} = e^{2\pi i z}$ (for $z$ in the upper half-plane) as a uniformiser at $\infty$ and take $d\underline{q}$ as a basis for  $\mathrm{Cot}_{\tilde{\infty}}(X_0(p))$. Using these identifications, the map $f_p^*$ then corresponds to: \begin{align*} f_p^* : \mathrm{Cot}_{\tilde{0}}(A_p) & \longrightarrow \mathrm{Cot}_{\tilde{\infty}}(X_0(p)) \\ \sum_{n=1}^\infty a_n \underline{q}^{n-1} d\underline{q} & \longmapsto a_1. \end{align*} Since the right-hand side is one-dimensional, it suffices to show that this map is non-zero. Since $A_p$ is a non-trivial optimal quotient, using \citep[p.~140]{ratisog} we may choose a cusp form $\sum_{n=1}^\infty a_n \underline{q}^{n-1} d\underline{q}$ that is an eigenvector for the Hecke operators (see (\ref{heckop}) below for how these are defined), and this cusp form will satisfy $a_1 \neq 0$ (for if $a_1 = 0$, by applying the Hecke operators to this form, all the other coefficients would also be $0$).
\end{proof}

Thanks to Proposition \ref{form_inf} and the discussion preceeding it, we will be able to prove Theorem \ref{mult} if we can construct a non-trivial optimal quotient of $J_0(p)$ that has rank $0$ over $\Q$. We will see how Mazur did this by constructing the \textbf{Eisenstein quotient} of $J_0(p)$ in \citep{eisenstein}. We will then simply state its key properties.

We first define, for a prime $\ell \neq {p}$, the Hecke operator $T_\ell$ as an element of the endomorphism ring of the Jacobian $J_0(p)$. We in fact already mentioned these Hecke operators in the proof of Proposition \ref{form_inf}. Write $\underline{q} = e^{2 \pi i z}$ for $z$ in the upper half-plane. On $\underline{q}$-expansions, we have \begin{equation} \label{heckop} T_\ell \left( \sum_{n=1}^\infty a_n \underline{q}^{n-1} d \underline{q} \right) = \sum_{n = 1}^\infty a_{\ell n} \underline{q}^{n-1}d \underline{q} + \sum_{n=1}^\infty a_n \underline{q}^{\ell n-1} d\underline{q}.  \end{equation} This gives rise to a map on the cotangent space (as in the proof of Proposition \ref{form_inf}), which in turn induces a map on $J_0(p)$ (see \citep[p.~102]{ribstein}).
Next, the Atkin--Lehner involution $w_p$ on $X_0(p)$ also induces (by extending to divisors) an element of the endomorphism ring of $J_0(p)$. We now define the Hecke algebra to be the $\Z$-algebra defined by \[ \mathbb{T} := \langle w_p, ~ T_\ell : \ell \neq p \rangle. \]
We then define the \textbf{Eisenstein ideal} as \[ \mathbb{I} := \langle w_p+1, ~ T_\ell - \ell -1 : \ell \neq p \rangle, \] and the \textbf{Eisenstein quotient} of the Jacobian $J_0(p)$ as \[ J_e(p) := \frac{J_0(p)}{\left(\bigcap_{k=1}^{\infty} \mathbb{I}^k \right) J_0(p)}. \] 
The Eisenstein quotient is an optimal quotient of $J_0(p)$ and we will now state the key property of $J_e(p)$ that we need. 

\begin{theorem}[Mazur, {\citep[Theorem~4]{eisenstein}}] The Eisenstein quotient $J_e(p)$ is a non-trivial abelian variety of rank $0$ over $\Q$ .
\end{theorem}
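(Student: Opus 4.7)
The obvious candidate for a nonzero class in $J_e(p)$ is the \emph{cuspidal subgroup} $C \subseteq J_0(p)(\Q)$ generated by the divisor class $c = [(0) - (\infty)]$. It is classical (via Manin--Drinfeld, or directly using Siegel units on $X_0(p)$) that $C$ is cyclic of order $n$ equal to the numerator of $(p-1)/12$, and for $p > 19$ we have $n > 1$. My first step is to show that the Eisenstein ideal $\mathbb{I}$ annihilates $c$, whence $C$ injects into $J_e(p)$. Since $w_p$ swaps the two cusps, $(w_p+1)c = 0$ is immediate. For $\ell \neq p$, one computes directly on cuspidal divisors --- reflecting the fact that the weight-$2$ Eisenstein series on $\Gamma_0(p)$ is a $T_\ell$-eigenform with eigenvalue $\ell+1$ --- that $T_\ell \cdot c = (\ell+1)c$. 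Hence $c \in J_0(p)[\mathbb{I}]$ and $C$ survives in the quotient, giving $J_e(p) \neq 0$.

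\textbf{Strategy for rank zero.} To show $J_e(p)(\Q)$ is finite, I would exploit the Eichler--Shimura congruence: on each $\lambda$-adic Tate module of $J_0(p)$ with $\lambda \nmid \ell p$, the Hecke operator $T_\ell$ satisfies
\begin{equation*}
\mathrm{Frob}_\ell^2 - T_\ell \mathrm{Frob}_\ell + \ell = 0.
\end{equation*}
Since $\mathbb{I}$ annihilates the Tate module of the Eisenstein quotient (up to finite index), $T_\ell$ acts on $T_\lambda J_e(p)$ as $\ell+1$, and the relation factors as $(\mathrm{Frob}_\ell - 1)(\mathrm{Frob}_\ell - \ell) = 0$. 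Combined with Lemma \ref{chip}(iii), which identifies the determinant of the Galois action with the cyclotomic character, this forces the semisimplification of the Galois representation on $T_\lambda J_e(p)$ to be an extension of the trivial representation by the cyclotomic character. This very rigid Galois structure is the key geometric input.

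\textbf{Descent and main obstacle.} The rank bound then comes from a Kummer-style descent using an Eisenstein prime $\mathfrak{p} = (\mathbb{I}, \ell) \subset \mathbb{T}$, for a prime $\ell \mid n$. By the analysis above, $J_e(p)[\mathfrak{p}]$ sits in a Galois-equivariant sequence
\begin{equation*}
0 \longrightarrow \mu_\ell \longrightarrow J_e(p)[\mathfrak{p}] \longrightarrow \Z/\ell\Z \longrightarrow 0.
\end{equation*}
One embeds $J_e(p)(\Q)/\mathfrak{p}J_e(p)(\Q)$ into a Selmer subgroup of $H^1(\Q, J_e(p)[\mathfrak{p}])$, cut out by local conditions unramified outside $\{\ell, p, \infty\}$, and bounds it via inflation--restriction against $H^1(\Q, \mu_\ell)$ (controlled by Kummer theory and the class group of $\Q(\zeta_\ell)$) and $H^1(\Q, \Z/\ell\Z)$ (controlled by the allowed ramification). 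Iterating $\mathfrak{p}$-adically would then bound the entire $\mathfrak{p}$-adic completion of $J_e(p)(\Q)$, and finiteness follows. The hard part is precisely this descent: matching the Selmer local condition at $p$ to the actual geometry of $J_e(p)$ at $p$, where $J_0(p)$ has bad reduction, requires a delicate analysis of the N\'eron model of $J_e(p)$ at $p$ and of the supersingular locus of $X_0(p)$ in characteristic $p$. This is the heart of Mazur's Eisenstein-ideal paper \citep{eisenstein}, and I would follow his setup closely rather than seek a shortcut.
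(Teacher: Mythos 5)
The paper does not prove this theorem: it explicitly cites \citep[Theorem~4]{eisenstein} and writes ``We will now simply state the key property of $J_e(p)$ that we need,'' treating the result as a black box, since a genuine proof occupies the bulk of Mazur's long and deep Eisenstein ideal paper. Your proposal is therefore a reconstruction of Mazur's argument rather than a competitor to anything proved here, and the broad ingredients you identify are the right ones: the cuspidal subgroup, the Eichler--Shimura relation forcing $\mathrm{Frob}_\ell$ to act as $1$ or $\ell$ on the Tate module of an Eisenstein quotient, a two-step filtration of the $\mathfrak{p}$-torsion with graded pieces $\mu_\ell$ and $\Z/\ell\Z$, and a Selmer-group descent whose hard local condition at $p$ requires the N\'eron model of $J_0(p)$ and the supersingular geometry of $X_0(p)$ in characteristic $p$.

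There is, however, a gap in your non-triviality step. Showing $\mathbb{I}$ annihilates $c = [(0)-(\infty)]$ places $c$ in $J_0(p)[\mathbb{I}]$, but this does not by itself show that $c$ has nonzero image in $J_e(p) = J_0(p)/\gamma J_0(p)$ with $\gamma = \bigcap_k \mathbb{I}^k$: you must still rule out $c \in \gamma J_0(p)$, and in general a torsion class killed by an ideal $I$ can perfectly well lie in $I \cdot A$. Mazur's route to non-triviality is ring-theoretic: he proves $\mathbb{T}/\mathbb{I} \cong \Z/n\Z$ where $n$ is the numerator of $(p-1)/12$, so for $p > 13$ the Eisenstein ideal is proper, hence contained in a maximal (Eisenstein) ideal $\mathfrak{p}$ above each $\ell \mid n$; the cuspidal eigenform congruent to the Eisenstein series mod $\mathfrak{p}$ then produces a nonzero optimal quotient of $J_e(p)$. (He in fact shows the sharper statement that $C$ maps isomorphically onto $J_e(p)(\Q)$, so finiteness and non-triviality come together.) Your sketch is a fair map of the terrain, but --- as the paper itself acknowledges by declining to prove the statement --- filling in that terrain is essentially the content of \citep{eisenstein}.
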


We note that Mazur proves a lot more about $J_e(p)$ in \citep{eisenstein}. We now put the different parts together to prove Proposition \ref{multprop}, which in turn implies Theorem \ref{mult}.

\begin{proof}[Proof of Proposition \ref{multprop}] Let $J_e(p)$ denote the Eisenstein quotient of $J_0(p)$ and write $f_p: X_0(p) \rightarrow J_0(p) \rightarrow J_e(p)$ for the composition of the Abel--Jacobi map with base point $\infty$ and the quotient map to $J_e(p)$. We have that $\widetilde{f_p(x)} = \widetilde{f_p(\infty)}$, and since the rank of $J_e(p)(\Q)$ is $0$, and $q > 2$, we have that $f_p(x) = f_p(\infty)$. By Proposition \ref{form_inf}, the map $f_p$ is a formal immersion at $\tilde{x}$, and we may then apply Lemma \ref{form_inj} to conclude that $x = \infty$.
\end{proof}

\begin{remark} \label{rem} An alternative to using the Eisenstein quotient is to instead use the \emph{winding quotient} of $J_0(p)$, constructed by Merel in \citep{merel}, some years later than Mazur's construction of the Eisenstein quotient. The winding quotient is conjecturally the \emph{largest} quotient of $J_0(p)$ that has has rank $0$ over $\Q$, and the proof of the fact that the winding quotient is non-trivial and has rank $0$ over $\Q$ is more straightforward than the analogous result for the Eisenstein quotient. However, this fact still relies on many deep results. The construction of the winding quotient allowed Merel to prove the uniform boundedness theorem for torsion subgroups of elliptic curves over number fields \citep[Theorem~1]{merel}.
\end{remark}


\section{Galois representations}


In this section we will suppose that $E / \Q$ is an elliptic curve and that $p > 19$ is a prime such that $E$ admits a rational $p$-isogeny, $\varphi$. By Theorem \ref{mult}, we know that any prime $q \neq 2,p$ is a prime of potentially good reduction for $E$. In this section, we will study the mod $p$ Galois representation of $E$ and ultimately prove Mazur's isogeny theorem. We will broadly follow Mazur's original proof, although we will formulate the intermediate results a little differently.


\subsection{The isogeny character}


By Lemma \ref{equiv}, the mod $p$ Galois representation of $E$ is reducible, and \[\overline{\rho}_{E,p} \sim \left( \begin{smallmatrix} \lambda & * \\ 0 & \lambda'  \end{smallmatrix} \right), \] for characters $\lambda, \lambda' : G_\Q \rightarrow \mathbb{F}_p^\times$. Moreover, by Lemma \ref{chip}, we have $\lambda \lambda' = \chi_p$.  The character $\lambda: G_\Q \rightarrow \mathbb{F}_p^\times$ is called the \textbf{isogeny character} of the pair $(E,\varphi)$ and it will be our main object of study. This isogeny character describes how the absolute Galois group acts on the kernel of $\varphi$. Indeed, if $\ker(\varphi) = \langle R \rangle$, then for $\tau \in G_\Q$, we have that \[ R^\tau = \lambda(\tau) \cdot R. \]

The following result is a crucial step towards proving Theorem \ref{Main}. It allows us to directly relate $\lambda$ to the mod $p$ cyclotomic character $\chi_p$.

\begin{theorem}\label{lam_chi}  Let $E / \Q$ be an elliptic curve and let $p > 19$ be a prime for which $E$ admits a rational $p$-isogeny. Write $\lambda$ for the corresponding isogeny character. Then \[ \lambda^{12} = \chi_p^s, ~ \text{ where } s \in \{0,4,6,8,12\}. \] Moreover, if $s = 6$ then $p \equiv 3 \pmod{4}$. 
\end{theorem}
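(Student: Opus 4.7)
The plan has three stages. Stage (1): show $\lambda^{12}$ is unramified at every prime $q \neq p$. For $q \notin \{2, p\}$, Theorem~\ref{mult} gives that $E$ has potentially good reduction at $q$, so by N\'eron-Ogg-Shafarevich the inertia $I_q$ acts on $T_p(E)$ through a finite cyclic quotient; Serre-Tate theory bounds the order of this quotient by $12$ in residue characteristic $\neq 2$, so $\lambda^{12}|_{I_q} = 1$. The prime $q = 2$ requires separate treatment: since $\chi_p|_{I_2}$ is trivial ($\chi_p$ is unramified outside $p$ and $p \neq 2$), a case analysis shows $\lambda^{12}|_{I_2} = 1$ --- in the potentially multiplicative case after a quadratic twist, and in the potentially good case via the reducibility of $\overline{\rho}_{E,p}|_{G_{\Q_2}}$ combined with the structure of finite flat group schemes over $\Z_2$.

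Stage (2): a character $G_\Q \to \mathbb{F}_p^\times$ unramified outside $p$ factors (by class field theory, specifically Kronecker--Weber) through $\Gal(\Q(\zeta_{p^\infty})/\Q)$, and since the target has order coprime to $p$, through $\Gal(\Q(\zeta_p)/\Q) \cong \mathbb{F}_p^\times$. The characters of this cyclic group are precisely the powers of $\chi_p$, so $\lambda^{12} = \chi_p^s$ for some $s \in \Z/(p-1)$.

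Stage (3): restrict $s$ to $\{0, 4, 6, 8, 12\}$. The value $s \bmod (p-1)$ is determined by $\lambda^{12}|_{I_p}$. Global reducibility of $\overline{\rho}_{E,p}$ restricted to $G_{\Q_p}$ rules out supersingular reduction at $p$ (where $I_p$ acts irreducibly via fundamental characters of level $2$). Hence $E$ is potentially ordinary at $p$, acquiring ordinary reduction over a tame extension of ramification index $e \in \{1, 2, 3, 4, 6\}$. The classification of finite flat group schemes of order $p$ over the ring of integers of such extensions (Raynaud, Oort--Tate) then pins down $\lambda|_{I_p} = \chi_p^a$ for $a$ in a specific list depending on $e$; computing $12a \bmod (p-1)$ case by case yields exactly $\{0, 4, 6, 8, 12\}$.

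The extra claim that $s = 6$ forces $p \equiv 3 \pmod 4$ should follow from the dual isogeny: since $\lambda' = \chi_p/\lambda$ and $s = 6$ is the self-dual exponent ($s = 12 - s$), the induced relation $(\lambda^2 \chi_p^{-1})^6 = 1$ together with the local constraint at $p$ forces $\mathbb{F}_p^\times$ to lack a primitive $4$th root of unity, i.e., $p \equiv 3 \pmod 4$. The main obstacle is stage (3): deriving the precise list $\{0, 4, 6, 8, 12\}$ requires the classification of finite flat group schemes of order $p$ over tamely ramified extensions of $\Z_p$ and a careful enumeration of tame characters. The argument at $q = 2$ in stage (1) is also non-trivial, as the generic bound on the image of inertia at $2$ is $24$ rather than $12$, so additional structural input from the reducibility is needed.
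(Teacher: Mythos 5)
Your proposal follows the paper's overall strategy (determine $\lambda^{12}|_{I_p}$, then globalize via class field theory using unramifiedness away from $p$), and stages (1) and (2) align with the paper's Step 2. However, there is a genuine gap in stage (3). After ruling out supersingular reduction at $p$ via global reducibility, you conclude ``Hence $E$ is potentially ordinary at $p$.'' This does not follow: eliminating supersingular leaves open potentially \emph{multiplicative} reduction at $p$, and Theorem~\ref{mult} only controls primes $q \neq 2,p$, so it says nothing at $q = p$. The paper therefore has a separate Step 1a for this case, in which $E$ is a (quadratic twist of a) Tate curve, yielding $\lambda^2|_{I_p} \in \{1, \chi_p^2|_{I_p}\}$ and hence $s \in \{0,12\}$. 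In your draft this possibility is silently dropped. It also undermines your derivation of the final claim: to argue that $s=6$ forces $p \equiv 3 \pmod 4$, one needs $\lambda|_{I_p} = \chi_p^a$ with $a$ constrained by Raynaud's result, which presupposes potentially good reduction; one must first observe that $s=6$ is incompatible with the potentially multiplicative formulas when $p>19$ (since $6 \not\equiv 0, 12 \pmod{p-1}$).

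Two further remarks. First, the ``self-dual exponent'' framing of the final claim is too vague as written, but the underlying computation is sound and essentially what the paper does: $s=6$ gives $12a \equiv 6 \pmod{p-1}$ on $I_p$, i.e.\ $6(2a-1)\equiv 0$, which is impossible if $4 \mid p-1$; the paper phrases this via $(e,r)=(4,2)$ and the congruence $ae\equiv r \pmod{p-1}$. Second, you are right to flag the $q=2$ subtlety in stage (1), where the image of inertia on $E[p]$ can have order up to $24$; the paper's Step 2 glosses over this. The resolution is that $\lambda|_{I_2}$ is a character, so it factors through the abelianization of the inertia image (a subgroup of $\mathrm{SL}_2(\mathbb{F}_3)$ in the worst case), and these abelianizations all have exponent dividing $12$, so $\lambda^{12}|_{I_2}=1$. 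Your sketch via finite flat group schemes over $\Z_2$ would also work, but the abelianization observation is more elementary. Overall this is the right approach with one missing case; patching the potentially multiplicative case at $p$ would make it complete.
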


We call the integer $s \in \{0,4,6,8,12\}$ appearing in this theorem the \textbf{isogeny signature} of $\lambda$, or of the pair $(E,\varphi)$.

\begin{proof} The proof of this result is somewhat technical and we break it down into several steps. We mainly follow \citep[2183--2184]{bornes}.

\textbf{Step 1:} We first prove the statement on the inertia group $I_p \subset G_\Q$, namely that \[ \lambda^{12} |_{I_p} = \chi_p^s |_{I_p}, ~ \text{ where }  s \in \{0,4,6,8,12\}, \] and that if $s = 6$ then  $p \equiv 3 \pmod{4}$. We identify $I_p$ as a subgroup of $G_{\Q_p}$ via $I_p \subset D_p \cong G_{\Q_p}$, and view $E$ as an elliptic curve over $\Q_p$.

\textbf{Step 1a:} Suppose $E$ has potentially multiplicative reduction at $p$. Then according to whether $E$ has split or non-split multiplicative reduction, $E$ is either isomorphic to a Tate curve, or the quadratic twist of a Tate curve \citep[Appendix C.~14]{silverman}. It follows that \[ \overline{\rho}_{E,p} \sim \theta \cdot \left( \begin{smallmatrix} \chi_p & * \\ 0 & 1  \end{smallmatrix} \right) \] for a quadratic character $\theta$. So $\lambda^2 |_{I_p} = 1 $ or $\chi_p^2 |_{I_p}$, and the statement follows upon taking sixth powers.

\textbf{Step 1b:} Suppose $E$ has potentially good reduction at $p$. Then there exists an extension $K / \Q_p$ such that $E$ attains good reduction at $p$ and such that the ramification degree $e = e(K/\Q_p) \in \{1,2,4,6\}$. We write $I_p'$ for the inertia subgroup of $G_K$, which we may view as a subgroup of $I_p$. We have the following the facts:
\begin{enumerate}[(i)]
\item $\lambda |_{I_p} = \chi_p^a |_{I_p}$ for some integer $a$.  
\item $\lambda^e |_{I_p'} = \chi_p^r|_{I_p'}$ for some integer $0 \leq r \leq e$. 
\end{enumerate}
Here, (i) follows from the classification of characters on the tame inertia group given in \citep[Proposition~5]{serre}, and (ii) is a deeper result of Raynaud (see \citep[Corollary~3.4.4]{raynaud} or \citep[pp.~277-278]{serre}) which uses the fact that $E$ has good reduction over $K$. Then, on $I_p'$, we have that $\chi_p^{ae} = \chi_p^{r}$, so \begin{equation} \label{aer}ae \equiv r \pmod{p-1}. \end{equation} Also, taking 12th powers in (i) and writing $s = 12r/e$ gives  \[ \lambda^{12} |_{I_p} = \chi_p^{12a} |_{I_p} = \chi_p^{\frac{12a e}{ e}} |_{I_p} = \chi_p^s |_{I_p}. \] For each pair $(e,r)$ we can then compute $s = 12r/e$. Recall that $ e \in \{1,2,4,6\}$ and that $0 \leq r \leq e$. Moreover, if $e$ is even then $r$ must also be even by (\ref{aer}). Running through the list of possible pairs $(e,r)$ proves that $s \in \{0,4,6,8,12 \}$. Finally if $s = 6$, then $(e,r) = (4,2)$ and we deduce that $p \equiv 3 \pmod{4}$ from (\ref{aer}). 

\textbf{Step 2:} We deduce the statement on the full Galois group $G_\Q$. Using similar techniques to the above, one can show that $\lambda^{12}$ is unramified away from $p$ (see \citep[pp.~2185--2187]{bornes}). Also, $\chi_p^s$ is unramified away from $p$ (including at the infinite place, since $s$ is even). It follows that the character $\lambda^{12} / \chi_p^s : G_\Q \rightarrow \mathbb{F}_p^\times$ is everywhere unramified, and therefore trivial on $G_\Q$ (using class field theory). 
\end{proof}


\subsection{A root of two polynomials}


Let $q \neq p$ be a prime, and let $\sigma_q \in D_q \subset G_\Q$ denote a Frobenius element at $q$ (defined in Section 2). We are going to study $\lambda(\sigma_q) \in \mathbb{F}_p^\times$. We will see that $\lambda(\sigma_q)$ is a root of two different polynomials, and use this to extract information about the prime $p$.

First of all, by Theorem \ref{lam_chi}, we have that \[ \lambda^{12}(\sigma_q) = \chi_p^s(\sigma_q) = q^s~(\mathrm{mod}~p), \] where $s$ is the isogeny signature of $\lambda$. It follows that $\lambda(\sigma_q)$ is a root of the mod $p$ reduction of the polynomial \begin{equation}\label{poly1} X^{12} - q^s \in \Z[X]. \end{equation}

Next, $\lambda(\sigma_q)$ is an eigenvalue of $\overline{\rho}_{E,p}(\sigma_q)$ and therefore satisfies the characteristic polynomial of this matrix, so \[ \lambda(\sigma_q)^2 - \mathrm{Tr}(\overline{\rho}_{E,p}(\sigma_q)) \lambda(\sigma_q) + q = 0, \] where we have used the fact that $\mathrm{det}(\overline{\rho}_{E,p}(\sigma_q)) = \chi_p(\sigma_q) = q.$ Now, the trace of $\overline{\rho}_{E,p}(\sigma_q)$ is the reduction mod $p$ of the trace of $\rho_{E,p}(\sigma_q)$ on the $p$-adic Tate module of $E$, so $\lambda(\sigma_q)$ is also a root of the mod $p$ reduction of the polynomial \begin{equation}\label{poly2} X^2 - \mathrm{Tr}(\rho_{E,p}(\sigma_q)) X + q \in \Z[X]. \end{equation}

Combining these results together, the polynomials (\ref{poly1}) and (\ref{poly2}) share a root mod $p$ (namely $\lambda(\sigma_q)$), so \begin{equation*} p \mid \mathrm{Res}\left(X^{2} - \mathrm{Tr}(\rho_{E,p}(\sigma_q))X+q, ~ X^{12}-q^s\right),\end{equation*} where $\mathrm{Res}$ denotes the resultant of the two polynomials. This is a good start, but $\mathrm{Tr}(\rho_{E,p}(\sigma_q))$ is an unknown quantity. The following result will help in this regard.

\begin{lemma}[{\citep[Theorem~3]{serretate}}]\label{HasseBd}
Let $E / \Q$ be an elliptic curve and let $p$ be any prime. Let $q \neq p$ be a prime of potentially good reduction for $E$ and let $\sigma_q \in G_\Q$ denote a Frobenius element at $q$. Then $\mathrm{Tr}(\rho_{E,p}(\sigma_q)) \in \Z$ and satisfies \[ \lvert \mathrm{Tr}(\rho_{E,p}(\sigma_q)) \rvert \leq 2\sqrt{q}.\]
\end{lemma}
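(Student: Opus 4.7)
The plan is to reduce both claims to the classical Hasse bound and its integrality for elliptic curves of good reduction, via a base change to a finite extension of $\Q_q$ over which $E$ acquires good reduction. Since $E/\Q$ has potentially good reduction at $q$, there is a finite Galois extension $K/\Q_q$ over which $E_K$ has good reduction. Let $\tilde{E}$ be the reduction over the residue field $k_K$, write $f = f(K/\Q_q)$, so $|k_K| = q^f$. By the N\'eron--Ogg--Shafarevich criterion applied over $K$, the inertia subgroup $I_K$ acts trivially on $T_p(E)$, so Hasse's theorem for $\tilde{E}/k_K$ gives that the characteristic polynomial of $\rho_{E,p}(\mathrm{Frob}_K)$ is $X^2 - a_K X + q^f \in \Z[X]$ with $|a_K| \leq 2 q^{f/2}$.

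Next I would transfer this information back to $\sigma_q$. By enlarging $K$ if necessary, we may arrange that $\sigma_q^f \in G_K$ and that $\sigma_q^f$ reduces to $\mathrm{Frob}_K$ modulo $I_K$. Since $I_K$ acts trivially on $T_p(E)$, this forces
\[ \rho_{E,p}(\sigma_q)^f = \rho_{E,p}(\sigma_q^f) = \rho_{E,p}(\mathrm{Frob}_K). \]
Letting $\alpha, \beta \in \overline{\Q}_p$ denote the eigenvalues of $\rho_{E,p}(\sigma_q)$, we see that $\alpha^f, \beta^f$ are the roots of the integral polynomial above, so under every complex embedding $|\alpha^f| = |\beta^f| = q^{f/2}$, whence $|\alpha| = |\beta| = \sqrt{q}$. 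Combined with $\alpha \beta = \det \rho_{E,p}(\sigma_q) = \chi_p(\sigma_q) = q$, the triangle inequality yields $|\mathrm{Tr}(\rho_{E,p}(\sigma_q))| = |\alpha + \beta| \leq 2\sqrt{q}$. For integrality, the trace lies in $\Z_p$ since $\rho_{E,p}(\sigma_q) \in \mathrm{GL}_2(\Z_p)$. Running the same argument for any prime $\ell \neq p, q$ and invoking the compatibility of the system $\{\rho_{E,\ell}\}$ at primes of potentially good reduction (again due to Serre--Tate), these traces agree as algebraic numbers, so the common value lies in $\bigcap_\ell \Z_\ell \cap \overline{\Q} = \Z$.

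The main obstacle is Step 2: even though $\sigma_q$ is only well-defined modulo $I_q$, the trace $\mathrm{Tr}(\rho_{E,p}(\sigma_q))$ genuinely depends on the choice of Frobenius lift (through the finite but generally non-central image $\rho_{E,p}(I_q)$), so we must carefully align a chosen lift with the Frobenius of $K$. This is a matter of extending $K$ to absorb any incompatibility between the image of $\sigma_q^f$ in $\mathrm{Gal}(K/\Q_q)$ and the identity. Once this identification is made rigorous, the deduction of the Weil absolute value $\sqrt{q}$ for the eigenvalues, and hence the bound on the trace, is immediate from Hasse over $K$; integrality needs the extra ingredient of $\ell$-independence of traces of Frobenius, which is itself a consequence of good reduction after base change.
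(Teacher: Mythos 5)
Your approach---reducing to Hasse over an extension $K/\Q_q$ of good reduction---is the right starting idea, but you miss the one observation that the paper's proof hinges on: the extension $K$ can be chosen to be \emph{totally ramified} over $\Q_q$. With that choice the residue field of $K$ is still $\mathbb{F}_q$, so the given Frobenius lift $\sigma_q$ already lies in $D_q \cap G_K$ and projects to the $q$-power Frobenius in $G_{\mathbb{F}_q}$; under $T_p(E)\cong T_p(\tilde E)$ it therefore acts as the $q$-power Frobenius endomorphism of $\tilde E/\mathbb{F}_q$. Hasse's theorem over $\mathbb{F}_q$ then gives everything at once: the characteristic polynomial of $\rho_{E,p}(\sigma_q)$ is $X^2 - aX + q$ with $a\in\Z$ and $\lvert a\rvert\le 2\sqrt q$. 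There is no need to pass to the $f$th power, take $f$th roots of eigenvalues, or argue about absolute values separately from integrality.

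Because you instead work with a general $K$ of residue degree $f$, two gaps open up. First, the ``enlarge $K$ so that $\sigma_q^f\in G_K$ and projects to $\mathrm{Frob}_K$'' step is more delicate than you acknowledge: enlarging $K$ changes $f$, and you would need the image of $\sigma_q$ in $\mathrm{Gal}(K/\Q_q)$ to have order exactly $f$, which is not automatic and requires an argument. Second, and more seriously, your integrality step is essentially circular: you show only that $\alpha+\beta$ is an algebraic integer in $\Z_p$, and then invoke ``compatibility of the system $\{\rho_{E,\ell}\}$ at primes of potentially good reduction (due to Serre--Tate)'' to land in $\Z$. But the assertion that $\mathrm{Tr}(\rho_{E,\ell}(\sigma_q))$ is a rational integer independent of $\ell$ at such primes \emph{is} Theorem~3 of Serre--Tate, i.e.\ the very lemma being proved. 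Moreover $\bigcap_\ell \Z_\ell\cap\overline{\Q}$ is not well-defined without fixing embeddings, and an algebraic integer that maps into $\Z_\ell$ under one embedding for each $\ell$ need not be rational. Choosing $K$ totally ramified removes both issues entirely.
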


In the case that $E$ has good reduction at $q$, this follows from the well-known Hasse--Weil bound. We provide a brief proof of the lemma here, as this is an important step in the argument.

\begin{proof} We view $E / \Q_q$ and $\sigma_q \in G_{\Q_q} \cong D_q$. Since $E$ has potentially good reduction at $q$, there exists an extension $K / \Q_q$ such that $E$ attains good reduction over $K$. The key point here is that we can choose $K$ such that $K / \Q_q$ is totally ramified \citep[p.~498]{serretate}, so that both fields have residue field $\mathbb{F}_q$, and we may view $\sigma_q \in G_K$.  Since $E$ has good reduction over $K$, we have that $T_p(E) \cong T_p(\tilde{E})$, where $\tilde{E}$ denotes the reduction of $E$ mod $q$. Under this isomorphism, the element $\sigma_q$ acts on the reduction of $E$ via the usual Frobenius endomorphism, so $\mathrm{Tr}(\rho_{E,p}(\sigma_q)) \in \Z$ satisfies the required inequality by the Hasse--Weil bound. 
\end{proof}

This lemma provides us with a finite set of possibilities for the trace at $\sigma_q$ and immediately leads to the following result.

\begin{proposition}\label{ResProp}
Let $E/ \Q$ be an elliptic curve and let $p > 19$ be a prime such that $E$ admits a rational $p$-isogeny, $\varphi$. Let $s$ be the isogeny signature of $(E,\varphi)$ and let $q \neq p$ be a prime of potentially good reduction for $E$. Then 
  \[ p \mid R_{q,s} \coloneqq \mathrm{lcm}_{\; \lvert a \rvert \leq 2 \sqrt{q}}\left(\mathrm{Res}(X^2-aX+q, X^{12}-q^s) \right). \] 
\end{proposition}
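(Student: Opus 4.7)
The proof essentially packages together the two polynomial observations the authors have already spelled out, with one added appeal to the Hasse--Weil style bound. The plan is as follows.

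First, I would fix a Frobenius element $\sigma_q \in G_\Q$ at $q$ and consider $\lambda(\sigma_q) \in \mathbb{F}_p^\times$. By Theorem \ref{lam_chi}, applied at $\sigma_q$, together with Lemma \ref{chip}(ii), we have $\lambda(\sigma_q)^{12} = \chi_p(\sigma_q)^s = q^s \pmod{p}$, so $\lambda(\sigma_q)$ is a root modulo $p$ of $f_1(X) := X^{12} - q^s$. Next, since $\overline{\rho}_{E,p}$ is upper triangular with $\lambda$ on the diagonal, $\lambda(\sigma_q)$ is an eigenvalue of $\overline{\rho}_{E,p}(\sigma_q)$, hence a root of its characteristic polynomial. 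Writing $a := \mathrm{Tr}(\rho_{E,p}(\sigma_q))$ for the trace on the $p$-adic Tate module, the characteristic polynomial of $\overline{\rho}_{E,p}(\sigma_q)$ is the reduction modulo $p$ of $f_2(X) := X^2 - aX + q$ (using $\det \overline{\rho}_{E,p} = \chi_p$ from Lemma \ref{chip}(iii)), and so $\lambda(\sigma_q)$ is also a root of $f_2$ modulo $p$.

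Since $f_1$ and $f_2$ share the root $\lambda(\sigma_q)$ in $\overline{\mathbb{F}}_p$, their resultant vanishes modulo $p$, i.e.\ $p \mid \mathrm{Res}(f_2, f_1) = \mathrm{Res}(X^2 - aX + q, X^{12} - q^s)$. The only thing that remains is to remove the dependence on the a priori unknown integer $a$. This is precisely what Lemma \ref{HasseBd} delivers: since $q$ is a prime of potentially good reduction and $q \neq p$, we have $a \in \Z$ with $|a| \leq 2\sqrt{q}$. Therefore $p$ divides at least one of the finitely many resultants $\mathrm{Res}(X^2 - aX + q, X^{12} - q^s)$ indexed by $a \in \Z \cap [-2\sqrt{q}, 2\sqrt{q}]$, and hence $p$ divides their least common multiple $R_{q,s}$, as required.

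There is no real obstacle: every ingredient has been assembled in the preceding subsection. The only point that deserves a brief comment is that $a$ may depend on the (non-canonical) choice of Frobenius lift $\sigma_q$, but this does not affect the argument, since whichever lift we pick, the corresponding integer $a$ still lies in the Hasse range, so the divisibility by $p$ is absorbed into the $\mathrm{lcm}$ defining $R_{q,s}$.
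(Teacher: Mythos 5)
Your proof is correct and follows exactly the route the paper takes: establish that $\lambda(\sigma_q)$ is a common root modulo $p$ of $X^{12}-q^s$ (via Theorem~\ref{lam_chi} and Lemma~\ref{chip}) and of the reduction of $X^2 - \mathrm{Tr}(\rho_{E,p}(\sigma_q))X + q$, conclude $p$ divides the resultant, and then invoke Lemma~\ref{HasseBd} to replace the unknown trace by the lcm over all integers $a$ in the Hasse range. The closing remark about independence of the choice of Frobenius lift is a sensible extra sanity check, though it is already absorbed by taking the lcm exactly as you say.
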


The integer $R_{q,s}$ is independent of the prime $p$ and only depends on the prime $q$ and the isogeny signature $s$. We use the primes $q = 3$ and $q = 5$ and compute the following values: \begin{align*} R_{3,0} = R_{3,12} & = 8131531262400 = 2^6 \cdot 3^2 \cdot 5^2 \cdot 7^2 \cdot 13^2 \cdot 19 \cdot 37 \cdot 97, \\
R_{5,0} = R_{5,12} & = 17072929032886039622400 \\ & = 2^8 \cdot 3^5 \cdot 5^2 \cdot 7^2 \cdot 13^2 \cdot 17 \cdot 31^2 \cdot 37 \cdot 61 \cdot 157 \cdot 229, \\
R_{3,4} = R_{3,8} & = 9815256000 = 2^6 \cdot 3^8 \cdot 5^3 \cdot 11 \cdot 17. 
\end{align*}

From these computations, we see that if $(E,\varphi)$ is an elliptic curve with a rational $p$-isogeny $\varphi$ with $p > 19$, then by Theorem \ref{mult} and Proposition \ref{ResProp} we see that either $s \in \{0, 12 \}$ and $p = 37$, or that $s = 6$. Since $p=37$ appears in our original list of primes in Theorem \ref{Main}, we are reduced to considering the case $s = 6$. 

\subsection{Completing the proof}


In the case $s = 6$, we are unable to obtain any information on the prime $p$, and this is due to the fact that $R_{q,6} = 0$ for all primes $q$. This occurs because the polynomials $X^2 - q$ and $X^{12} - q^6$ share a root and their resultant is therefore $0$. We need to refine our argument in this case. Recall that $p \equiv 3 \pmod{4}$ when $s = 6$. Our aim is to prove the following proposition, and this will complete the proof of Theorem \ref{Main}.

\begin{proposition}\label{class_no} Let $E/ \Q$ be an elliptic curve and let $p > 19$ be a prime such that $E$ admits a rational $p$-isogeny, $\varphi$. Suppose the isogeny signature of $(E,\varphi)$ is $6$. Then $\Q(\sqrt{-p})$ has class number $1$, so $p \in \{43, 67, 163 \}$. 
\end{proposition}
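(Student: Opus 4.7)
My plan is to show that under the signature-$6$ hypothesis, $E$ admits a quadratic twist with complex multiplication by an order in $K := \Q(\sqrt{-p})$. Since a $\Q$-rational CM elliptic curve has its $j$-invariant in the ring class field of its CM order over $K$, this will force the class number of that order, and hence $h(\mathcal{O}_K)$, to equal $1$. Combined with $p > 19$ and $p \equiv 3 \pmod{4}$, the classical classification of class-number-one imaginary quadratic fields then yields $p \in \{43, 67, 163\}$.

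First I would analyse the auxiliary character $\eta := \lambda^2 \chi_p^{-1}$. The hypothesis $\lambda^{12} = \chi_p^6$ gives $\eta^6 = 1$, and evaluating at complex conjugation $c$ shows $\eta$ is odd (since $\chi_p(c) = -1$ while $\lambda^2(c) = 1$). Combined with Theorem~\ref{mult}, the character $\eta$ is unramified outside $\{2, p\}$. The inertia analysis at $p$ from Step~1b of the proof of Theorem~\ref{lam_chi}, with $(e, r) = (4, 2)$ and $4a \equiv 2 \pmod{p-1}$, then forces $\eta|_{I_p} = \chi_p^{(p-1)/2}|_{I_p}$, i.e.\ the restriction of the Legendre symbol modulo $p$; in particular $\eta|_{I_p}$ has order exactly $2$. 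This rules out any nontrivial order-$3$ component of $\eta$ (such a component would be ramified at $p$ and would make $\eta|_{I_p}$ have order $6$), so $\eta$ is a quadratic Dirichlet character. Among odd quadratic characters of conductor supported in $\{2, p\}$ whose restriction to $I_p$ is the Legendre symbol, the only options are $\chi_{-p}$ and $\chi_{-2p}$, both of which correspond, up to a possible unramified-at-$p$ twist, to the field $K$.

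Next I would pass to the quadratic twist $E' := E^\eta$. Its isogeny character is $\lambda' = \lambda \eta$, and by construction $(\lambda')^2 = \chi_p$. So on $E'$ both diagonal characters of $\overline{\rho}_{E', p}$ are the same character $\lambda'$, and the common square equals $\chi_p$. This is the ``potentially CM'' configuration: combined with the potentially supersingular reduction of $E'$ at $p$ (reflected in $(e, r) = (4, 2)$), the mod-$p$ identity $(\lambda')^2 = \chi_p$ should lift to a genuine endomorphism of $E'$ defined over $K$, giving complex multiplication by an order in $K$. I expect this lifting step to be the main obstacle: upgrading a mod-$p$ character identity to an actual endomorphism of an elliptic curve is delicate, and in Mazur's original argument this is where the full strength of the Eisenstein-ideal machinery of \citep{eisenstein}, together with Serre--Tate rigidity and a Hecke-character lift (a form of Deuring's theorem on CM lifts of mod-$p$ Galois representations), enters.

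Once CM by some order $\mathcal{O} \subset \mathcal{O}_K$ has been established on $E'$, the main theorem of complex multiplication gives $[K(j(E')) : K] = h(\mathcal{O})$. Since $j(E') \in \Q \subset K$ we must have $h(\mathcal{O}) = 1$, and hence $h(\mathcal{O}_K) \mid h(\mathcal{O}) = 1$. The classical list of class-number-one imaginary quadratic fields, intersected with $p > 19$ and $p \equiv 3 \pmod{4}$, then yields $p \in \{43, 67, 163\}$, which completes the proof of Proposition~\ref{class_no} and thereby of Theorem~\ref{Main}.
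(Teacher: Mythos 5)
There is a genuine gap, and it is the one you yourself flag: the step from the mod-$p$ identity $(\lambda')^2 = \chi_p$ to an actual endomorphism of $E'$ by an order in $\Q(\sqrt{-p})$ does not follow. A congruence between characters cutting out the mod-$p$ Galois representation constrains $\overline{\rho}_{E',p}$ only, and carries no information about the endomorphism ring of $E'$ as an abelian variety; there is no ``CM lifting'' theorem of the form you would need (Deuring's lifting results go in the other direction, from CM in characteristic $p$ to CM lifts, and the Serre--Tate and Eisenstein-ideal machinery in Mazur's paper is used in Section~3 for the potentially-good-reduction statement, not to manufacture CM here). In fact one should expect your claim to be false in general: the isogeny signature and the factorisation of $\lambda$ are governed entirely by $\overline{\rho}_{E,p}$, which can certainly be shared by non-CM curves. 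So the plan as written cannot be completed.

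The paper avoids CM entirely and proceeds much more elementarily. Using Lemma~\ref{psi} (your $\eta$ is $\psi^2$ times the Legendre symbol, so the two analyses are close at this stage), the paper shows in Lemma~\ref{inert} that if $q$ is \emph{not} inert in $\Q(\sqrt{-p})$ and $2<q<p/4$, then evaluating $\lambda$ and $\chi_p\lambda^{-1}$ at $\sigma_q$ and comparing with the Hasse bound on $\mathrm{Tr}(\rho_{E,p}(\sigma_q))$ forces $p\mid \mathrm{Tr}(\rho_{E,p}(\sigma_q))^2 - rq$ with $r\in\{1,4\}$, a contradiction for $4q<p$. Hence every odd rational prime $q<p/4$ is inert, so every ideal of odd norm less than $p/4$ is principal; a short explicit computation with $\alpha=(3+\sqrt{-p})/2$ or $(1+\sqrt{-p})/2$ handles the primes above $2$, and the Minkowski bound $2\sqrt{p}/\pi<p/4$ (valid for $p>19$) then gives class number $1$ directly, with no appeal to the Baker--Heegner--Stark classification. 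If you want to keep your framework, you would need to replace the CM step by something that actually extracts arithmetic from $\lambda$ at Frobenius elements; the paper's route does exactly this and is the argument you should adopt.

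Two smaller remarks. First, your final divisibility $h(\mathcal{O}_K)\mid h(\mathcal{O})$ is stated in the correct direction, but it is moot given the gap. Second, your description of $\eta$ as quadratic of conductor supported in $\{2,p\}$ with prescribed restriction to $I_p$ is plausible but is doing real work (ruling out a cubic component of $\psi$ using ramification at $p$); even this sub-step would need the inertia computation from the paper's Step 1b spelled out carefully, and it still would not get you to CM.
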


In order to prove this result, we will start by looking in more detail at the isogeny character $\lambda$. 

\begin{lemma}\label{psi} Suppose the isogeny signature of $\lambda$ is $6$. Then \[ \lambda = \psi \cdot \chi_p^{\frac{p+1}{4}}, \] for some character $\psi : G_\Q \rightarrow \mathbb{F}_p^\times$ satisfying $\psi^6 = 1$.
\end{lemma}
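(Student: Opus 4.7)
The plan is to define $\psi := \lambda \cdot \chi_p^{-(p+1)/4}$ (note that $(p+1)/4$ is an integer since $p \equiv 3 \pmod 4$) and to verify that $\psi^6 = 1$; rearranging then gives the desired factorisation $\lambda = \psi \cdot \chi_p^{(p+1)/4}$.

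First I would compute $\psi^{12}$ directly, using the relation $\lambda^{12} = \chi_p^6$ supplied by Theorem \ref{lam_chi} in the case $s = 6$:
\begin{equation*}
\psi^{12} = \lambda^{12} \cdot \chi_p^{-3(p+1)} = \chi_p^{6 - 3(p+1)} = \chi_p^{-3(p-1)} = 1,
\end{equation*}
the last equality using that $\chi_p$ takes values in $\mathbb{F}_p^\times$, a group of order $p-1$. So $\psi$ factors through the subgroup of $12$th roots of unity in $\mathbb{F}_p^\times$.

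The only non-trivial step is to upgrade $\psi^{12} = 1$ to $\psi^6 = 1$, and this is exactly where the congruence $p \equiv 3 \pmod 4$ (also provided by Theorem \ref{lam_chi} when $s=6$) is essential. The subgroup of $12$th roots of unity in $\mathbb{F}_p^\times$ is cyclic of order $\gcd(12, p-1)$; since $p \equiv 3 \pmod 4$ forces $v_2(p-1) = 1$, we have $\gcd(12, p-1) \mid 6$, so every $12$th root of unity in $\mathbb{F}_p^\times$ is already a $6$th root of unity. Hence $\psi^6 = 1$. I do not expect any real obstacle here: the argument is a short calculation together with an elementary observation about the structure of $\mathbb{F}_p^\times$, and the mod $4$ congruence on $p$ coming from Theorem \ref{lam_chi} is precisely what makes the chosen exponent $(p+1)/4$ legitimate and allows the apparent $12$th power relation to descend to a $6$th power relation.
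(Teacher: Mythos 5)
Your proposal is correct and follows essentially the same route as the paper: define $\psi := \lambda \chi_p^{-(p+1)/4}$, show $\psi^{12}=1$ via $\lambda^{12}=\chi_p^6$ and $\chi_p^{p-1}=1$, and descend to $\psi^6=1$ using $\gcd(12,p-1)\mid 6$ (a consequence of $p \equiv 3 \pmod 4$). The paper merely reverses the order of the two steps, explaining the descent before doing the twelfth-power computation.
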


\begin{proof} We verify that the character $\psi := \lambda \chi_p^{-\frac{p+1}{4}}$ satisfies $\psi^6 = 1$. Since $\psi^{p-1} = 1$ and $p \equiv 3 \pmod{4}$, it will in fact be enough to check that $\psi^{12} = 1$  (because $\gcd(12, p-1) \mid 6$). We have \[ \psi^{12} = \frac{\lambda^{12}}{\chi_p^{3(p+1)}} = \frac{\lambda^{12}}{\chi_p^{3(p-1)}\chi_p^{6}} = \frac{\lambda^{12}}{\chi_p^6} = 1, \] as required.
\end{proof}

Lemma \ref{psi} relates the character $\lambda$ to $\chi_p$ (rather than relating a power of $\lambda$ to $\chi_p$ as we had previously) and this will allow us to directly consider $\lambda(\sigma_q)$. For small values of $q$ (relative to $p$), this will allow us to obtain a congruence condition on $q$, which we may translate as a splitting condition in the quadratic field $\Q(\sqrt{-p})$ to obtain the following lemma.

\begin{lemma}\label{inert} Suppose the isogeny signature of $(E,\varphi)$ is $6$. If $2 < q < p/4$ then $q$ is inert in $\Q(\sqrt{-p})$.
\end{lemma}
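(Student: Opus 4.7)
The plan is to reformulate the statement and deduce a contradiction from a mild but effective use of the Hasse bound. Since $p \equiv 3 \pmod{4}$, the discriminant of $\Q(\sqrt{-p})$ is $-p$, so any prime $q \neq p$ is unramified there; and by quadratic reciprocity (using that $(p-1)/2$ is odd),
\[
\left(\frac{-p}{q}\right) = \left(\frac{-1}{q}\right)\left(\frac{p}{q}\right) = \left(\frac{q}{p}\right).
\]
Hence $q$ is inert in $\Q(\sqrt{-p})$ if and only if $q$ is a non-residue modulo $p$, and I would argue by contradiction, assuming $q$ is a quadratic residue mod $p$. Setting $\beta := q^{(p+1)/4} \in \mathbb{F}_p^\times$, this assumption gives $\beta^2 = q^{(p+1)/2} = q$ in $\mathbb{F}_p$, so $\beta$ is a square root of $q$ modulo $p$.

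Next, I would feed in Lemma \ref{psi}: evaluating $\lambda = \psi \cdot \chi_p^{(p+1)/4}$ at a Frobenius $\sigma_q$ and using $\chi_p(\sigma_q) \equiv q$ yields $\lambda(\sigma_q) = \zeta \beta$ in $\mathbb{F}_p^\times$, where $\zeta := \psi(\sigma_q)$ satisfies $\zeta^6 = 1$. Combining this with the characteristic polynomial $X^2 - aX + q$ of $\overline{\rho}_{E,p}(\sigma_q)$ from Section~4.2 (with $a := \mathrm{Tr}(\rho_{E,p}(\sigma_q)) \in \Z$ and $|a| \leq 2\sqrt{q}$ by Lemma \ref{HasseBd}, whose potential-good-reduction hypothesis is supplied by Theorem \ref{mult}), and using $\beta^2 = q$, I would obtain
\[
(\zeta^2 + 1)\,q \equiv a\zeta\beta \pmod{p}.
\]
Since $\zeta^2$ has order dividing $3$ in $\mathbb{F}_p^\times$ whereas $-1$ has order $2$, we have $\zeta^2 \neq -1$, so I may divide by $\zeta\beta$ to obtain $a \equiv (\zeta + \zeta^{-1})\,\beta \pmod{p}$. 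A brief case analysis on the order of $\zeta$ (which divides $6$) shows $\zeta + \zeta^{-1} \in \{\pm 1, \pm 2\}$, and squaring gives
\[
a^2 \equiv q \text{ or } 4q \pmod{p}.
\]

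To close, I would use the Hasse bound: $0 \leq a^2 \leq 4q$ gives $|a^2 - q| \leq 3q$ and $|a^2 - 4q| \leq 4q$. The hypothesis $q < p/4$ makes both bounds strictly smaller than $p$, so the two congruences become equalities $a^2 = q$ or $a^2 = 4q$ in $\Z$; both are impossible because $q > 2$ is a prime, so neither $q$ nor $4q$ is a perfect square. This contradiction proves $q$ is inert. The main technical obstacle I anticipate is the bookkeeping around $\zeta$: verifying $\zeta^2 \neq -1$ (so the derivation is non-degenerate and we do not accidentally force $a = 0$) and cataloguing the four possible values of $\zeta + \zeta^{-1}$ need to be done carefully, but both reduce to routine arithmetic in $\mathbb{F}_p^\times$ under $p > 19$.
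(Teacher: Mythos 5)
Your proof is correct and follows essentially the same strategy as the paper: both use Lemma~\ref{psi} to express $\lambda(\sigma_q)$ in terms of $\psi(\sigma_q)$ and a square root of $q$, feed this into the characteristic polynomial of $\overline{\rho}_{E,p}(\sigma_q)$, reduce to $a^2 \equiv q$ or $4q \pmod{p}$ via the case analysis on the order of $\psi(\sigma_q)$, and close with the Hasse bound and $4q < p$. The only cosmetic differences are that you make the quadratic-reciprocity step explicit and work directly with the single eigenvalue $\zeta\beta$ rather than summing the two squared eigenvalue identities as the paper does.
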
 

\begin{proof} Assume for a contradiction that $2 < q < p/4$, but that $q$ is not inert in  $\Q(\sqrt{-p})$. We may rephrase this as a congruence condition and deduce that \[ q^{\frac{p+1}{2}} \equiv q \pmod{p}.\]
Using this congruence and Lemma \ref{psi} we see that  \[ \lambda^2(\sigma_q) = \psi^2(\sigma_q) \chi_p^{\frac{p+1}{2}}(\sigma_q) = \psi^2(\sigma_q) q^{\frac{p+1}{2}} = \psi^2(\sigma_q)q  \] and \[ (\chi_p \lambda^{-1})^2(\sigma_q)  = \frac{q^2}{\psi^2(\sigma_q)q} = \psi^{-2}(\sigma_q) q.\] 
Summing these two identities, we obtain \[ \lambda^2(\sigma_q) + (\chi_p \lambda^{-1})^2(\sigma_q) = q\left(\psi^2(\sigma_q) + \psi^{-2}(\sigma_q) \right). \] Rewriting the left-hand side as $\left(\lambda(\sigma_q) + (\chi_p \lambda^{-1})(\sigma_q)\right)^2 - 2\chi_p(\sigma_q)$    
and recalling that $\mathrm{Tr(\overline{\rho}_{E,p}(\sigma_q))} = \widetilde{\mathrm{Tr}}(\rho_{E,p}(\sigma_q))$, where the $\sim$ denotes reduction mod $p$, we have that 
\[ \widetilde{\mathrm{Tr}}(\rho_{E,p}(\sigma_q))^2 - 2q = q\left(\psi^2(\sigma_q) + \psi^{-2}(\sigma_q) \right). \] Since $\psi^6 = 1$, we have that $\psi^2(\sigma_q)$ is a third root of unity in $\mathbb{F}_p^\times$, and so the right-hand side of this expression if either $2q$ or $-q$ according to whether $\psi^2(\sigma_q)$ is trivial or not. We deduce that \[ p \mid \mathrm{Tr}(\rho_{E,p}(\sigma_q))^2 - rq, \quad \text{ where }r = 1 \text{ or } 4. \] In either case, Lemma \ref{HasseBd} combined with our assumption that $4q < p$ leads to a contradiction.
\end{proof}

Having proven this claim, we can now use some (relatively) straightforward algebraic number theory to prove that the class number of $\Q(\sqrt{-p})$ is $1$. We prove Proposition \ref{class_no} and therefore complete the proof of Theorem~\ref{Main}.

\begin{proof}[Proof of Proposition \ref{class_no}]
Let $\mathfrak{q} \mid q$ be a prime ideal of the ring of integers of $\Q(\sqrt{-p})$ satisfying $2 < \mathrm{Norm}(\mathfrak{q}) < p/4$ with $\mathrm{Norm}(\mathfrak{q})$ odd. Then $2 < q < p/4$, and by Lemma \ref{inert} the ideal $\mathfrak{q}$ is principal. It follows that any ideal $\mathfrak{b}$ satisfying $2 < \mathrm{Norm}(\mathfrak{b}) < p/4$  with $\mathrm{Norm}(\mathfrak{b})$ odd is principal.

Next, we claim that the prime ideals above $2$ are also principal. If $2$ is inert in $\Q(\sqrt{-p})$ then the claim holds, so we may assume $2$ splits in $\Q(\sqrt{-p})$. Then $p \equiv -1 \text{ or } 7 \pmod{16}$. Suppose $p \equiv -1 \pmod{16}$, write $p = -1 + 16t$, and consider \[ \alpha = \frac{3 + \sqrt{-p}}{2} \in \mathcal{O}_{\Q(\sqrt{-p})}.\] Then $ \mathrm{Norm}(\alpha) =  2(1+2t)$, so $\alpha \cdot \mathcal{O}_{\Q(\sqrt{-p})} = \mathfrak{c} \cdot \mathfrak{b},$ for a prime ideal $\mathfrak{c} $ above $2$ (of norm $2$), and an ideal $\mathfrak{b}$ satisfying $ \mathrm{Norm}({\mathfrak{b}}) < \frac{p}{4}$ with $\mathrm{Norm}(\mathfrak{b})$ odd.  The ideal $\mathfrak{b}$ is therefore principal, so $\mathfrak{c}$ is as well. In the case that $ p \equiv 7 \pmod{16}$, we apply the same argument with $\alpha =(1 + \sqrt{-p})/2$.

The Minkowski bound for $\Q(\sqrt{-p})$ is  \[ M_{\Q(\sqrt{-p})} = \frac{2 \sqrt{p}}{\pi} < p/4, \] with the inequality holding since $p > 19 $. So every ideal whose norm is less than the Minkowski bound is principal and we conclude that $\Q(\sqrt{-p})$ has class number $1$.
\end{proof}

This completes the proof of Mazur's isogeny theorem. We take the opportunity to note that for each prime $p$ appearing in Theorem \ref{Main}, there does indeed exist an elliptic curve defined over $\Q$ that admits a rational isogeny of degree $p$.


\section{Further results}


Mazur's isogeny theorem classifies the possible prime degrees of rational isogenies. We start by stating a natural extension of this result, which is to consider rational cyclic isogenies. We say that an isogeny is \textbf{cyclic} if its kernel is a cyclic group.

\begin{theorem}[Mazur, Kenku, {\citep{kenku}}] Let $E/ \Q$ be an elliptic curve. Let $N$ be an integer such that $E$ admits a rational isogeny which is cyclic of degree $N$. Then \[ N \in \{1, 2,  \dots, 19\} \cup \{ 21, 25, 27, 37, 43, 67, 163\}. \]
\end{theorem}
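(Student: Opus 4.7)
The plan is to derive the composite case from Mazur's prime isogeny theorem by studying rational points on composite-level modular curves $X_0(N)$. If $E$ admits a rational cyclic isogeny of degree $N$ with kernel $C \cong \Z/N\Z$, then for every prime $\ell$ dividing $N$ the unique subgroup of $C$ of order $\ell$ is $G_\Q$-stable, so by Lemma \ref{equiv} it gives a rational $\ell$-isogeny of $E$. Theorem \ref{Main} then forces every prime factor of $N$ to lie in $\{2,3,5,7,11,13,17,19,37,43,67,163\}$. The problem reduces to determining which integers $N$ built from these primes support a non-cuspidal rational point on $X_0(N)$.

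Two reductions remain. First, I would bound the exponent of each admissible prime: the task is to show that $X_0(\ell^{a+1})(\Q)$ consists only of cusps once $a$ reaches the claimed threshold, namely $a=4$ for $\ell=2$, $a=3$ for $\ell=3$, $a=2$ for $\ell=5$, and $a=1$ for every other $\ell$ in Mazur's list. Second, I would bound the number of distinct prime factors, by showing that any $N$ with at least two distinct prime factors that actually arises must lie in $\{6,10,12,14,15,18,21\}$. Both reductions follow the template from Section 3: for each candidate $N$, one picks a non-trivial rank-$0$ quotient $A$ of $J_0(N)$ (a Mazur-style Eisenstein quotient, Merel's winding quotient, or an explicit rank-$0$ isogeny factor of $J_0(N)^{\mathrm{new}}$), composes the Abel--Jacobi map based at a chosen cusp with the projection $J_0(N) \to A$, verifies via Lemma \ref{form_cot} that the resulting map is a formal immersion at that cusp in some good characteristic $q$, and invokes Lemma \ref{form_inj} to conclude that any rational point reducing to the cusp must equal it. For very small $N$, where $X_0(N)$ has genus $0$ or $1$, one can instead write down an explicit model and argue directly via Mordell--Weil.

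The main obstacle is a complication that does not appear at prime level: for composite $N$ the curve $X_0(N)$ has several cusps, so a putative rational point may reduce to any one of them, and the formal immersion criterion must be checked at each cusp separately, potentially with a different rank-$0$ quotient. At prime level, Mazur's Eisenstein quotient suffices and the Atkin--Lehner involution reduces the two cusps to one; at composite level one must carry out a finer Hecke-theoretic decomposition of $J_0(N)$ and exploit the full Atkin--Lehner group $\{w_d : d \parallel N\}$ to cut the casework down to something manageable. This painstaking level-by-level analysis is the technical heart of the argument and constitutes Kenku's contribution, with Mazur's isogeny theorem playing the role of reducing the problem to a finite, explicit list of levels $N$ to examine one by one.
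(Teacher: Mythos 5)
Your proposal follows the same route the paper takes: use Mazur's prime-degree theorem to constrain the prime factors of $N$, then reduce to determining non-cuspidal rational points on finitely many curves $X_0(N)$, which is Kenku's contribution. The paper itself gives no argument beyond this reduction and a citation to Kenku, so your sketch of the exponent bounds, the multi-prime cases, and the formal-immersion machinery at composite level is a reasonable and accurate elaboration of what the cited work actually entails.
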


Mazur's isogeny theorem reduces the proof of this theorem to computing the rational points on a finite number of curves $X_0(N)$. This was done in a series of papers by Kenku, culminating in the paper \citep{kenku}. 

A second extension of Mazur's isogeny theorem is to consider isogenies of prime degree over number fields other than $\Q$ (one may naturally extend the definitions presented in Section 2 to number fields). At this time, it has not been possible to obtain such a classification for a single other number field, although some progress has been made towards this, initiated by Momose in \citep{Momose}. It is possible to suitably generalise the results in Section 3 (and this leads to results such as Merel's uniform boundedness theorem mentioned in Remark \ref{rem}), and it is possible to generalise many results presented in Section 4. However, the sticking point is dealing with the case analogous to that of isogeny signature $6$, which we recall required special consideration in Section 4. One possibility is to assume the generalised Riemann hypothesis, in which case it is possible to obtain similar classifications for other number fields not containing an imaginary quadratic field of class number one \citep{B-D}. Another possibility is to assume the elliptic curve is semistable at the primes one is working with (a natural assumption in the framework of variants of Fermat's Last Theorem), as done by the author in \citep{MJ}. 

\bibliographystyle{plainnat}

\Addresses

\end{document}